\newtheorem{thm}{Theorem}[section]
\newtheorem{lemma}[thm]{Lemma}
\newtheorem{claim}[thm]{Claim}
\newtheorem{obs}[thm]{Observation}
\newcommand{\K}{\mathcal{K}} 
\newcommand{\MD}{\mathcal{MD}} 
\newcommand{\SD}{\mathcal{SD}}
\newcommand{\st}{\colon\ }
\newcommand{\B}{\mathcal{B}}
\newcommand{\DG}{\mathfrak{D}_{G}}
\newcommand{\dd}{\mathfrak{d}}
\newcommand{\A}{\mathcal{A}}
\newcommand{\sm}{\setminus}
\begin{document}

% DO NOT REMOVE: Creates space for Elsevier logo, ScienceDirect logo
% and ENDM logo
%\begin{verbatim}\end{verbatim}\vspace{2.5cm}

\begin{frontmatter}

\title{Cops and robbers on oriented toroidal grids}%\thanksref{ALL}}

\author[SFU]{Sebasti\'an Gonz\'alez Hermosillo de la Maza%\corref{cor1}\fnref{gonzalez}}
}
\ead{sga89@sfu.ca}

\author[SFU]{Seyyed Aliasghar Hosseini}
\ead{sahosseini@sfu.ca}

\author[SFU]{Fiachra Knox\fnref{knox}}
\ead{fknox@sfu.ca}

\author[SFU]{Bojan Mohar\fnref{mohar,mohar2}}
\ead{mohar@sfu.ca}

\author[McGill]{Bruce Reed\fnref{reed}}
\ead{breed@cs.mcgill.ca}

%\author{Bojan Mohar\thanksref{mohar}}
\address[SFU]{Simon Fraser University\\
						8888 University Drive\\
						Burnaby, BC, Canada\\}
						
\address[McGill]{School of Computer Science \\
			McGill University \\
			3480 University \\
			Montreal, Quebec, Canada H3A 2A7 }		

 \fntext[knox]{Supported by a PIMS post-doctoral fellowship.}
 \fntext[mohar]{Supported in part by an NSERC Discovery Grant R611450 (Canada),
   by the Canada Research Chair program, and by the
    Research Grant J1-8130 of ARRS (Slovenia).}
\fntext[mohar2]{On leave from:
    IMFM \& FMF, Department of Mathematics, University of Ljubljana, Ljubljana,
    Slovenia.}
 \fntext[reed]{Supported by an NSERC Discovery Grant.}
 
\begin{abstract}
The game of cops and robbers is a well-known game played on graphs. In this paper we consider the straight-ahead orientations of $4$-regular quadrangulations of the torus and the Klein bottle and we prove that their cop number is bounded by a constant. We also show that the cop number of every $k$-regularly oriented toroidal grid is at most 13.
\end{abstract}

\begin{keyword}
Cops and robber, directed graphs, toroidal grid, cop number
\end{keyword}

\end{frontmatter}
 
%\begin{document}

\section{Introduction}

\subsection{Basic definitions}

The game of Cops and Robbers is a well-known game on graphs that was  introduced by Nowakowski and Winkler \cite{nowakowski} and Quilliot \cite{quilliot}.
There are two players, one controls the cops and the other one controls the robber. The game starts with the cops selecting some vertices as their initial positions (multiple cops can select the same vertex). Then the robber chooses his initial position. From now on, first the cops move and then the robber moves, where moving means either staying at the same position or moving to a neighbouring vertex. The game on digraphs is defined in the same way except that every move must be made along a directed edge.

The cops win the game if one of them can get to the same vertex as the robber and the robber wins if he can avoid this indefinitely. The minimum number of cops that can guarantee the robber's capture in a graph or digraph $G$ is called the \emph{cop number} of $G$ and will be denoted by $c(G)$.

Game of cops and robbers on graphs has received lots of attention but very little is known about the game on digraphs. Basic results have been introduced in \cite{Bonato}. The game of Cops and Robbers in digraphs can be as natural and as inspiring as the game on undirected graphs.  

Recall that a (di)graph $\widehat Q$ is a \emph{cover} over a (di)graph $Q$ if there is a (di)graph homomorphism $\pi: \widehat Q\to Q$ (called the \emph{covering projection}) which maps the edges incident with any vertex $v$ bijectively onto the edges incident with $\pi(v)$ in $Q$. 
The following lemma from \cite{grid}  can be used to analyze the cops and robber game on $Q$ where a good strategy of cops on $\widehat Q$ is known.

\begin{lemma}\cite{grid}
\label{lem:cover}
Suppose that a graph or digraph $\widehat Q$ is a cover over a (di)graph $Q$. If $k$ cops have winning strategy on $\widehat Q$, then they also win on $Q$, that is $c(Q)\le c(\widehat Q)$.
\end{lemma}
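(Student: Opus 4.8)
The plan is to have the $k$ cops play on $Q$ by \emph{shadowing} a winning play on $\widehat Q$. The essential tool is the unique lifting property of a covering projection: if $\pi(\widehat u) = u$ and $uw$ is an edge (or an arc directed out of $u$) of $Q$, then there is a unique edge (arc) of $\widehat Q$ incident with $\widehat u$ that $\pi$ maps to $uw$, and hence a unique vertex $\widehat w \in \pi^{-1}(w)$ reachable from $\widehat u$ along that lifted edge. This follows directly from the bijectivity of $\pi$ on the edges incident with each vertex, together with the fact that $\pi$, being a homomorphism, preserves the orientation of arcs in the directed case.

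The cops maintain, in parallel with the real game on $Q$, a \emph{shadow game} on $\widehat Q$ in which $k$ \emph{shadow cops} occupy vertices $\widehat c_1, \dots, \widehat c_k$ and a single \emph{shadow robber} occupies a vertex $\widehat r$. Throughout, we preserve the invariant that the $i$th real cop sits on $\pi(\widehat c_i)$ and that the shadow robber projects onto the real robber, i.e. $\pi(\widehat r) = r$. To initialise, the cops choose $\widehat c_1, \dots, \widehat c_k$ as dictated by their winning strategy on $\widehat Q$ and place the real cops at the projected vertices $\pi(\widehat c_i)$. After the robber selects a start vertex $r$ on $Q$, we fix an arbitrary lift $\widehat r \in \pi^{-1}(r)$ for the shadow robber.

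A single round is simulated as follows. First the shadow cops move according to the winning strategy on $\widehat Q$; since $\pi$ is a homomorphism, each shadow move along an edge (arc) of $\widehat Q$ projects to an edge (arc) of $Q$, so the real cops can copy these moves and the invariant $c_i = \pi(\widehat c_i)$ is restored. Then the real robber moves from $r$ to some $r'$ along an edge (arc) of $Q$; using the unique lifting property at $\widehat r$ we obtain a unique $\widehat r' \in \pi^{-1}(r')$ with $\widehat r\,\widehat r'$ an edge (arc) of $\widehat Q$, and we move the shadow robber there (if the robber stays put, so does the shadow robber). Thus the shadow robber always performs a legal move in $\widehat Q$, so the shadow game is a genuine play of Cops and Robbers on $\widehat Q$ in which the cops follow their winning strategy.

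Consequently the shadow cops capture the shadow robber after finitely many rounds: at some moment $\widehat c_i = \widehat r$ for some $i$. Applying $\pi$ and the invariant gives $c_i = \pi(\widehat c_i) = \pi(\widehat r) = r$, so the corresponding real cop occupies the robber's vertex and the robber is caught on $Q$. Hence $k$ cops win on $Q$, and taking $k = c(\widehat Q)$ yields $c(Q) \le c(\widehat Q)$. The step requiring the most care is the lifting of the robber's move: one must check that the unique lift exists and respects arc orientation in the directed case, which is exactly what the covering condition guarantees. A minor point is bookkeeping the alternation of moves so that the shadow game respects the cops-move-first convention, which it does by construction.
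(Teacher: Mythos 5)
Your proof is correct and is essentially the standard argument: the paper itself cites this lemma from \cite{grid} without reproducing a proof, and the proof given there is exactly this shadow/lifting simulation, using the local bijectivity of the covering projection to lift the robber's moves and projecting the cops' winning play downward. Your attention to the directed case (the edge bijection restricting to out-edges because $\pi$ preserves orientation) and to the move alternation covers the only delicate points.
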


Throughout the paper we will stick with standard graph theory terminology and notation. By $P_n$ and $C_n$ we define the path and the cycle of order $n$, respectively.  The $k\times \ell$ \emph{grid} is the Cartesian product of paths $P_k\Box P_\ell$ and the $k\times \ell$ \emph{toroidal grid} is the Cartesian product of cycles $C_k\Box C_\ell$. Every toroidal grid has a natural embedding into the torus such that each face is bounded by a 4-cycle of the grid.

\subsection{4-regular quadrangulations}

A graph together with a 2-cell embedding in a surface with all faces bounded by 4-cycles is called a \emph{quadrangulation} of that surface. Our original motivation is in discussing the game of cops and robbers on 4-regular quadrangulations but, as shown in \cite{grid} and overviewed below, the discussion can be restricted to the case of toroidal grids via surface coverings. We do not intend to go into details, and refer to \cite{Stillwell} concerning the notions about covering spaces of surfaces used only in this short subsection.

Consider a 4-regular quadrangulation of a surface. It follows by Euler's formula that the surface is either the torus or the Klein bottle and it can be shown by using the Gauss-Bonnet Theorem that the straight-ahead walks partition the set of edges into cycles, all of which are noncontractible on the surface. These cycles can be split into two classes, each class consisting of pairwise disjoint cycles (we call them \emph{vertical cycles} and \emph{horizontal cycles}, respectively) such that each vertical and each horizontal cycle intersect (possibly more than once). By giving each of these cycles an orientation, we obtain an Eulerian digraph in which, at each vertex, the two incoming edges and two outgoing edges are consecutive in the local rotation around the vertex.
The universal cover of a 4-regular quadrangulation is the 4-regular tessellation of the plane with square faces (the integer grid), and every finite quotient of the integer grid is a 4-regular quadrangulation of the torus or the Klein bottle.
Four-regular quadrangulations of the torus admit a simple description. Each such quadrangulation is of the form $Q(r,s,t)$, where $r,s,t$ are arbitrary positive integers, $0\le t < r$, and $Q(r,s,t)$ is obtained from the $(r+1)\times (s+1)$ grid with underlying graph $P_{r+1}\square P_{s+1}$ by identifying the ``leftmost" path of length $s$ with the ``rightmost" one (to obtain a cylinder) and identifying the bottom $r$-cycle of this cylinder with the top one after rotating the top clockwise for $t$ edges.  This classification can be derived by considering appropriate fundamental polygon of the universal cover (which is isomorphic to the tessellation of the plane with squares).  Quadrangulations of the Klein bottle are a bit more complicated. While all toroidal quadrangulations $Q(r,s,t)$ are vertex-transitive maps, this is no longer true for the Klein bottle. For our purpose it will suffice to know that the orientable double cover $\widetilde Q$ of such a quadrangulation $Q$ is a quadrangulation of the torus and is therefore isomorphic to some $Q(r,s,t)$. This implies that any cover $C_n\Box C_n$ of $Q(r,s,t)$ is also a cover of $\widetilde Q$.

\subsection{Our main results}

The main result of this paper is a constant upper bound on the cop number of 4-regular quadrangulations with an arbitrary straight-ahead orientation. 

\begin{thm} \label{thm:4-regular quadrangulations 319} 
If $G$ is any 4-regular quadrangulation of a surface endowed with a straight-ahead orientation of its edges, then $c(G) \leq 319$.
\end{thm}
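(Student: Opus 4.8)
The plan is to peel off the two reductions made available by the structure theory and then concentrate all of the work on a single concrete object: an arbitrarily oriented square toroidal grid. First I would dispose of the surface. By Euler's formula $G$ embeds in the torus or the Klein bottle. If $G$ lies on the Klein bottle, I pass to its orientable double cover $\widetilde Q$, which is a toroidal quadrangulation; the straight-ahead orientation lifts, and Lemma~\ref{lem:cover} gives $c(G)\le c(\widetilde Q)$, so it suffices to treat the torus. A toroidal $4$-regular quadrangulation is isomorphic to some $Q(r,s,t)$, and for such a grid I would choose $n$ large enough (for instance $n=rs$, so as to unwind the twist $t$) that $\Gamma=C_n\Box C_n$ covers $Q(r,s,t)$. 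In the torus case this already reduces the problem to $\Gamma$; in the Klein bottle case the same $\Gamma$ covers $\widetilde Q$ as well. Thus in both situations a \emph{single} bound $c(\Gamma)\le 319$, valid for every straight-ahead orientation of $\Gamma$, settles the theorem through Lemma~\ref{lem:cover}.

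The entire problem then becomes: bound, uniformly in $n$, the cop number of $C_n\Box C_n$ under an \emph{arbitrary} straight-ahead orientation. Here the edges split into directed horizontal cycles (the rows) and directed vertical cycles (the columns), and — this is the source of the difficulty — each of these cycles may be oriented independently, so the cops face no global left-to-right or bottom-to-top regularity. I would attack this with a \emph{trap-and-shrink} scheme: a bounded squad of cops maintains a cylindrical region, bounded by two guarded cycles, that the robber provably cannot leave, and then advances the boundary cycles inward until the region collapses and the robber is caught.

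The technical core is a guarding subroutine that keeps the robber from crossing a chosen cycle. In the undirected or regularly oriented setting one cop can shadow the robber's projection onto a cycle, but under an arbitrary orientation a cop can traverse a column only in its prescribed direction, so a single cop cannot intercept a crossing that runs ``against'' that column. I would therefore use a constant-size group per guarded cycle, positioned so that, after moving with the cycle's orientation, they collectively cover every place the robber could attempt a crossing on his next move, exploiting oppositely oriented neighbouring cycles to handle both directions. I expect to first carry this out cleanly for a regularly oriented grid, obtaining a small constant (this is the regime in which $13$ cops suffice), and then pay for the arbitrariness of the orientation and for running a horizontal guard and a vertical guard simultaneously; the accumulation of these per-guard costs and their interactions is what inflates the final count to the stated constant rather than a small number.

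The main obstacle is precisely the interaction of directedness with arbitrary cycle orientations. Guarding can no longer be delegated to one shadowing cop, and a robber will try to exploit a badly oriented row or column to slip past a barrier that would hold in the regular case; the crux is to show that the guarding squads can always re-establish the barrier after every robber move, independently of the local orientation pattern, and that advancing the barrier never opens a gap. Once this invariant is proven, the shrink step and the final capture are routine, and the quantitative bound is simply the bookkeeping of how many cops each guard consumes and how many guards must operate at once.
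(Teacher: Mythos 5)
Your reduction to the toroidal grid is correct and is exactly how the paper proceeds: Klein bottle quadrangulations are handled through the orientable double cover, toroidal ones are of the form $Q(r,s,t)$, a suitable $C_n\,\square\,C_n$ covers either, and Lemma~\ref{lem:cover} transfers the bound. The gap is in the second half, which is where all the content of the theorem lives. Your plan is to guard individual directed cycles, each with ``a constant-size group'' of cops that covers every crossing point after each robber move. This is not a proof sketch that can be completed: a single line of the grid has $n$ vertices, the robber can attempt to cross it anywhere, and a constant number of cops constrained to move only in the cycle's prescribed direction cannot re-cover all $n$ crossing points after every move. Appealing to ``oppositely oriented neighbouring cycles'' does not repair this, because in an arbitrary straight-ahead orientation the neighbouring cycles need not be oppositely oriented, and in any case the interception problem is about horizontal reach along the guarded cycle, not about reversing direction.

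The paper's resolution rests on ideas absent from your proposal. The objects guarded are not cycles but \emph{streams} (maximal bundles of consecutive, identically oriented lines), and crucially one guards streams of \emph{maximum width} $w$: then any crossing stream is no wider than $w$, so when the robber tries to traverse the guarded stream $S$ he must spend time inside a conflux of bounded height. This is what makes a bounded formation work --- a ``paddle'' of $30$ cops spaced at distance roughly $w/3$ along the two boundary lines of $S$ (or the two lines just outside them), maintained as a moving formation via a five-state protocol that tracks the robber's coordinate along $S$. When the robber enters the middle third $S'$ of the stream, the conflux-trap lemmas (Lemmas~\ref{stream trap} and~\ref{trap3}, which dispatch three cops to the guard posts of the conflux containing him) catch him or confine him to a stream. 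Note also that the strategy never prevents the robber from crossing the outer lines of $S$; it only seals $S'$, and confinement comes from trapping him between the $S'$-parts of two guarded streams, with additional triples of cops guarding the confluxes where non-maximum-width streams meet. The count $319 = 5\cdot 60 + 6\cdot 3 + 1$ is the bookkeeping of exactly these devices, so without the stream/conflux/paddle machinery there is no route from your outline to the stated constant.
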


The above-mentioned results imply that every 4-regular quadrangulation of the torus or the Klein bottle has toroidal grid $C_n \square C_n$ as its covering graph.
By Lemma \ref{lem:cover}, we will be able to restrict ourselves to deal with different orientations of the toroidal grid $C_n \square C_n$, which is given later in the paper as Theorem \ref{thm:any directed cycles}. That result proves Theorem \ref{thm:4-regular quadrangulations 319}.

The paper \cite{grid} treats the cop number of orientations of $C_n \square C_n$ for which the corresponding digraph is vertex-transitive. In this paper we consider more general straight-ahead orientations, and show that their cop number is bounded by 319. While this bound is not small, we also discuss another case in which the the orientations are a bit more restricted, and we obtain a much better bound. See Theorem \ref{k-regular}.

\section{Streams, confluxes, and traps}

The grid we are working on is $C_n \square C_n$. For a vertex $v = (x,y) \in C_n \square C_n$, the digraph induced by $\{(x,z)\st z \in C_n\}$ will be called the \emph{row} of $v$, and the subdigraph induced by $\{(z,y)\st z \in C_n\}$ will be called the \emph{column} of $v$. A \emph{line} containing a vertex $v$ is either a row or a column of $v$. 
We will assume that the edges of $C_n \square C_n$ are oriented in such a way that every line is a directed $n$-cycle in the grid. We refer to this as a \emph{straight-ahead orientation} of the toroidal grid.
We say that two lines in $C_n \square C_n$ are \emph{consecutive} if one of their coordinates correspond to adjacent vertices in one of the factors of $C_n \square C_n$. A set $S$ of consecutive lines oriented in the same direction will be called a \emph{stream}\index{stream} and its width $w(S)$ is the number of lines in the stream. If $S$ and $S'$ are streams such that $S' \subseteq S$, we say that $S'$ is a substream of $S$. 

If $S_1$ and $S_2$ are edge-disjoint streams and the set $\K = V(S_1) \cap V(S_2)$ is not empty, we will call $\K$ a \emph{conflux}\index{conflux} (see Figure \ref{conflux}). The vertices in a conflux $\K$ with the minimum number of neighbours in $\K$ are called \emph{corners}\index{corner}.
Notice that the set of corners of a conflux $\K = S_1 \cap S_2$ is never empty, and if $V(\K) \neq V(C_n \square C_n)$, then it can have  four vertices (if $w(S_1),w(S_2) \geq 2$), two vertices ($w(S_i)=1$ and $w(S_j) \geq 2$ with $\{i,j\} = \{1,2\}$) or one vertex ($w(S_1) = w(S_2) = 1$). 

If $\K$ has four corners, then a corner is \emph{main} if it has an odd number of outneighbours in $\K$ and \emph{secondary} otherwise (see Figure \ref{conflux}). However, if $\K$ has one or two corners, they will all be referred to as main.

\begin{figure}[ht!]
\begin{center}
\includegraphics{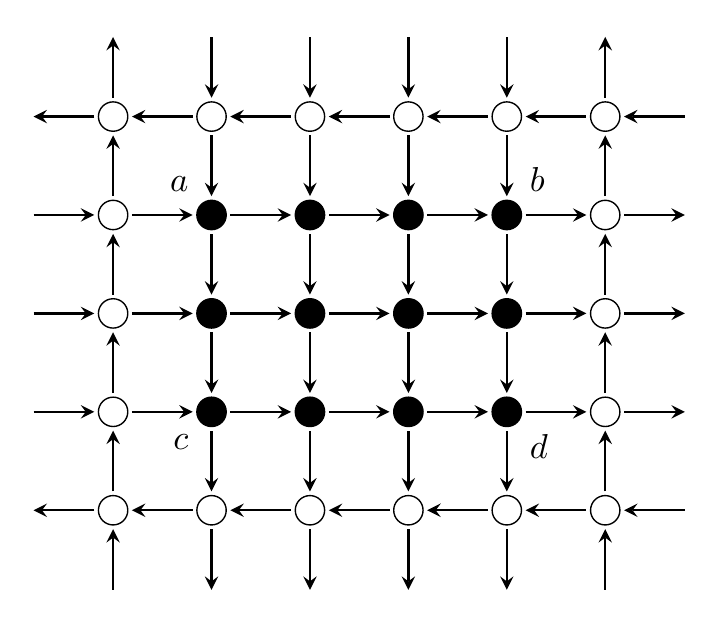}
\caption{The black vertices form a (maximal) conflux. Corners $b$ and $c$ of the conflux are its main corners, while $a$ and $d$ are secondary corners.} \label{conflux}
\end{center}
\end{figure}

We will always assume that the robber is forced to move from its current position. We can make sure this happens by chasing him with a cop. For Lemma \ref{trap1} and Lemma \ref{trap2}, we will assume that one cop is chasing the robber so the robber is forced to move. We will use $p(R)$ to denote the current position of the robber, and $p(C_i)$ for the position of the cop $C_i$.

\begin{lemma}\label{trap1}

Let $\K$ be a conflux with one cop on each main corner. If $p(R) \in V(\K)$ and $N^-(p(R)) \nsubseteq V(\K)$, then the robber will be captured or his movements will be restricted to a stream containing $\K$.

\end{lemma}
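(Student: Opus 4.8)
The plan is to exploit a strong monotonicity feature of the robber's motion inside $\K$. Orient the picture so that, within $\K$, every row runs east and every column runs north; then the two main corners are the top-left vertex $u=(a,1)$ and the bottom-right vertex $v=(1,b)$, where $a,b$ denote the widths of the two streams, and the two secondary corners are the ``source'' $(1,1)$ and the ``sink'' $(a,b)$. Place cop $A$ on $u$ and cop $B$ on $v$. The key observation is that while the robber stays in $\K$ he can only move north or east, so the quantity $\ell(R)=x+y$ (for $p(R)=(x,y)$) increases by \emph{exactly} $1$ every turn, independently of his choice. Thus, although cops move before the robber, the robber's next level is completely predictable.

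First I would prove a one-cop guarding claim: a cop sitting on row $a$ can prevent the robber from ever traversing an edge out of row $a$ to the north. Indeed, row $a$ meets the level set $\{x+y=\ell\}$ in the single vertex $(a,\ell-a)$, and a cop on row $a$ raises its own level by one each time it steps east. Maintaining the invariant $\ell(A)\ge \ell(R)$, with equality forcing $A$ onto the robber's level-vertex of row $a$, is therefore possible: when the levels are equal the cop steps east (matching the robber's forced $+1$) and stays on row $a$, and if the robber ever moves so as to lower his level the inequality only improves. Since the robber can reach row $a$ only by entering its unique level-vertex, which is occupied by $A$ whenever the levels agree, he is caught the instant he touches row $a$; hence he never escapes northward. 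The symmetric statement holds for cop $B$ on column $b$ and eastward escapes. Because $A$ starts at level $a+1$ and $B$ at level $b+1$, cop $A$ can start guarding precisely when the robber's level is at most $a+1$ and cop $B$ when it is at most $b+1$.

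Now I would use the hypothesis. The condition $N^-(p(R))\nsubseteq V(\K)$ places the robber on the ``entry'' sides of $\K$, namely row $1$ or column $1$; on column $1$ his level is $x+1\le a+1$ and on row $1$ it is $1+y\le b+1$. Hence at least one of the two guards can be activated immediately. If both can (for instance when the robber sits at the source corner), then both escape directions are sealed, the robber's level nevertheless keeps increasing by one each turn, and he is driven into the sink corner region and captured. If only one guard is active, say cop $A$'s, then the robber can leave $\K$ only through the unguarded (eastern) side, which deposits him in the horizontal stream $S_1$ containing $\K$; I would then argue that he remains confined to $S_1$, using that cop $A$ continues to bar the northern boundary of $S_1$ and that the robber's column coordinate is nondecreasing throughout $S_1$, so the only boundaries of $S_1$ he could approach are already controlled. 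The case where only cop $B$ is active is symmetric, confining the robber to the vertical stream $S_2$.

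The main obstacle is this last step: turning ``one escape direction is sealed'' into genuine confinement to a single stream. One must rule out the robber slipping out of the ``back'' side of the stream (the side opposite the guarded one) and must handle the cyclic wrap-around of the lines of length $n$, as well as the boundary interactions at the source and sink corners and the precise bookkeeping of when a guard can be switched on. I expect the monotone coordinate of the stream ($x$ within $S_2$, $y$ within $S_1$) to do most of this work, together with the guarding cop following the robber out of $\K$ while preserving its level invariant; making these fit together cleanly, rather than the guarding claim itself, is where the care is needed.
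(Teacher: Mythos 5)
Your one-cop guarding claim is sound inside $\K$, and it is essentially the same device the paper uses: the paper's $C_1$ simply holds the robber's line and lets the robber walk into it, and your level function $x+y$ (which the robber must increase by exactly $1$ per move while in $\K$) is a clean way of saying the same thing. The genuine gap is exactly the one you flag at the end, and it is not a technicality that the monotone coordinate will absorb: confining the robber to a stream requires guarding \emph{both} of its boundary lines, and your plan leaves this to a single cop whose invariant no longer holds once the robber leaves $\K$. Concretely, in the ``only cop $A$ is active'' case the robber exits east into the horizontal stream $S_1$ (rows $1$ through $a$). Outside $\K$ the columns crossing $S_1$ may be oriented southward, so (i) the robber can leave $S_1$ through row $1$, which nothing in your setup controls, and (ii) the quantity $x+y$ is no longer forced to increase, so cop $A$'s east-stepping invariant on row $a$ breaks: the robber can drop south, travel east past $A$'s column, and re-enter row $a$ ahead of him. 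The observation that ``the robber's column coordinate is nondecreasing throughout $S_1$'' only constrains east--west motion; it says nothing about the north--south boundaries of $S_1$, which are precisely the ones that must be sealed.

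The paper closes this hole by making the second cop do real work during the phase inside $\K$: while the first cop holds the robber's line, the other main-corner cop is maneuvered (copying the robber's moves in the transverse direction) so that at the moment the robber exits $\K$ \emph{both} cops sit on the robber's line, one on each boundary line of the target stream; from then on both cops track the robber, which is exactly the two-cop configuration that Lemma~\ref{stream trap} shows suffices for confinement. If you want to keep the level-function formulation, the missing step is to specify how the inactive guard reaches the far boundary line of the exit stream, on the robber's line, before the robber leaves $\K$, and to verify the timing --- which is where the hypothesis $N^-(p(R))\nsubseteq V(\K)$ (guaranteeing the active cop starts at the minimum-level vertex of its line relative to the robber) is actually consumed.
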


\begin{proof}
Without loss of generality we may assume the incoming edge to $p(R)$ comes from above due to the symmetry of the argument. 
Notice that when $N^-(p(R)) \nsubseteq V(\K)$, there is a main corner cop in the same row as $p(R)$. Let us call this cop $C_1$ and the other one $C_2$. In order to leave $\K$, the robber must step on the column where $C_1$ is or leave through the bottom. 

The strategy for $C_1$ and $C_2$ will be the following: If the robber moves towards $C_1$'s column, then $C_1$ stays where he is and $C_2$ copies the robber's move and moves towards $C_1$'s column. 

If the robber moves down, then $C_1$ moves down with the robber, and $C_2$
\begin{itemize}
    \item stays in the same place if it is in the same column but different row as the robber,
    \item moves down with the robber if he is in the same row but different column,
    \item and moves towards the robber's column otherwise.
\end{itemize}

By following this strategy, the robber and $C_1$ are always on the same row, so the robber cannot leave $\K$ crossing $C_1$'s column or he will be captured. Therefore the only why for the robber to exit $\K$ is to move down and leave $\K$ crossing $C_2$'s row. Note that when the robber is in the same row as $C_2$, then by the above strategy, $C_1$ is also in the same row. From now on, $C_1$ and $C_2$ will stay in the same row as the robber. This means that the robber's movements are restricted to $S$, the stream formed by the columns containing vertices of $\K$.
\end{proof}

Notice that in the case where the streams that form $\K$ have the same width, two cops guarantee the capture of the robber (or force the robber to stay still). However, once the robber's movements have been restricted to a stream, one extra cop will guarantee the capture. Note that for Lemma \ref{trap1} to work, we need to set up the trap before the robber enters it. It is possible to set a slightly different trap that works regardless of where the robber's in-neighbors are, but we need one more cop to do this.

\begin{lemma}\label{trap2}

Let $\K$ be a conflux with one cop on each main corner and one cop in the secondary corner of $\K$ without out-neighbors in $\K$ if such corner exists. If the robber is in $\K$, then he will be captured or his movements will be restricted to a stream containing $\K$.

\end{lemma}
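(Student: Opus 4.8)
The plan is to reduce to Lemma~\ref{trap1} whenever it applies and to let the extra cop --- the one on the secondary corner with no out-neighbours in $\K$ --- do all the work in the single case that Lemma~\ref{trap1} cannot handle. Orient the picture so that $S_1$ runs rightward and $S_2$ runs downward; then the secondary corner carrying the extra cop $C_3$ is the bottom-right corner of $\K$, and by symmetry this is without loss of generality. If $N^-(p(R))\nsubseteq V(\K)$ I would simply invoke Lemma~\ref{trap1} with the two main-corner cops and leave $C_3$ idle. So assume $N^-(p(R))\subseteq V(\K)$, i.e.\ the robber sits strictly inside $\K$ (in neither its top row nor its left column). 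The key structural observation is that, being forced to move along out-edges, he can only step right or down; hence this property persists, and his row- and column-coordinates are non-decreasing until he leaves $\K$.

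The strategy I would give $C_3$ is to wait on the corner until the robber commits, then shadow him along the boundary he is heading for. Because the robber advances monotonically towards the bottom-right, exactly one of three things must occur. (i) He first reaches the bottom row of $\K$: then $C_3$ already shares his row, and $C_3$ shadows the robber's row down the right column of $\K$, always staying weakly above him; this blocks every attempt to step rightward out of $S_2$, confining the robber to $S_2$. (ii) He first steps rightward across the right column of $\K$ (leaving $S_2$) while still above the bottom row: at that instant $C_3$ is exactly one column behind him on the bottom row, and $C_3$ shadows the robber's column along the bottom row, staying weakly to his left; this blocks every downward step out of $S_1$, confining the robber to $S_1$. (iii) He walks straight down the right column into the corner, where he is captured.

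The one delicate point --- which I expect to be the main obstacle --- is that the cops move first, so $C_3$ cannot simply copy the robber's previous move. This is exactly where the orientation saves us: since the robber can never decrease a coordinate, the shadow $C_3$ maintains (its row in case (i), its column in case (ii)) lags behind the robber by at most one, and $C_3$ keeps it so by the rule \emph{stay put when level, advance when one behind, never overshoot}. When the robber finally tries to cross the guarded boundary line he must advance into the very cell that would erase the lag, and $C_3$, moving first, occupies that cell, so the robber is caught rather than escaping. Making this lag bookkeeping precise, and checking that $C_3$ is never stranded outside $\K$ by committing too early (which is why, in case (ii), it waits on the corner until the robber has actually crossed), is the bulk of the argument. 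Finally, if one of $S_1,S_2$ has width $1$ then $\K$ degenerates to one or two corners, there is no secondary corner without out-neighbours, the hypothesis on $C_3$ is vacuous (``if such corner exists''), and the thin conflux is disposed of directly by the one or two main-corner cops as in Lemma~\ref{trap1}.
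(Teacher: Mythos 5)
Your overall plan --- exploit the forced monotone drift of the robber towards the out-corner and spring a trap when he reaches the boundary of $\K$ --- is the same idea as the paper's, which formalizes it by observing that after at most $w(S_1)+w(S_2)$ forced moves the robber lands in one of the two width-one sub-confluxes $\K_1 = V(S_1)\cap V(S_2')$ (the last column of $\K$) or $\K_2 = V(S_1')\cap V(S_2)$ (the last row of $\K$), both of whose main corners already carry cops, and then invokes Lemma~\ref{trap1} there. The genuine gap in your write-up is that you ask $C_3$ to do the confinement single-handedly. Confinement to a stream is intrinsically a two-cop job: a stream has two boundary lines, and a lone cop shadowing the robber along one of them leaves the other completely open. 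In your case (ii) the robber, having left $S_2$ to the right, sits in some row of $S_1$ strictly above the bottom row; outside $S_2$ the columns may well be oriented upward, so he can simply walk up and out through the top boundary line of $S_1$, which nobody is watching. Symmetrically, in your case (i) the robber, once below $\K$, may find rows oriented leftward and exit $S_2$ through its left boundary column, which $C_3$ (pinned to the right boundary) cannot reach. So the assertion that blocking rightward steps ``confines the robber to $S_2$'' does not follow: sealing one side of a stream does not confine.

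The repair is precisely to re-engage the main-corner cops, which your argument leaves idle after the initial case split. When the robber reaches the bottom row of $\K$, that row is itself a conflux whose two corners (both main, since its width is one) are occupied by $C_3$ and by the bottom-left main-corner cop, and the robber's vertical in-neighbour lies outside it; hence Lemma~\ref{trap1} applies verbatim and the \emph{two} cops end up riding the two boundary columns of $S_2$ in the robber's row. Dually, the rightmost column of $\K$ is handled by $C_3$ together with the top-right main-corner cop. Your lag-bookkeeping for $C_3$ is essentially one half of that two-cop argument; stated for $C_3$ alone it proves only that one boundary is sealed. (Your dispatch of the degenerate width-one case and of the case $N^-(p(R))\nsubseteq V(\K)$ is fine and matches the paper.)
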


\begin{proof}

Let $S_1$ and $S_2$ be the streams such that $S_1 \cap S_2 = \K$. If we have that $\min \{w(S_1), w(S_2)\} = 1$, then all the corners of $\K$ are main and by Lemma \ref{trap1} we are done. 
If $\min \{w(S_1), w(S_2)\} > 1$ and $v$ is the secondary corner of $\K$ with a cop, take the substreams $S_1'$ and $S_2'$ of $S_1$ and $S_2$, respectively, that contain just the row (column) through the vertex $v$. After at most $w(S_1)+w(S_2)$ moves, the robber will be on a vertex of $\K_1 = V(S_1) \cap V(S_2')$ or $\K_2 = V(S_1') \cap V(S_2)$. Since the main corners of both $\K_1$ and $\K_2$ are covered by cops, then an application of Lemma \ref{trap1} gives us the desired result.
\end{proof}

%\begin{figure}[h!]
%\begin{center}
%\includegraphics[totalheight=0.3\textheight]{Ejem}
%\includegraphics{confluxtrap}
%\caption{The black vertices indicate the main corners of $\K$,  and %the gray ones are the secondary corners.} \label{confluxtrap}
%\end{center}
%\end{figure}

Given a grid $G$ (with vertex set $V$ and arc set $A$), we can define the conflux digraph of $G$, which we will denote by $\mathfrak{D}_{G}$, as the digraph whose vertex set $V(\mathfrak{D}_{G})$ consists of all maximal confluxes of $G$, and where $(\K_1,\K_2)$ is an edge of $\mathfrak{D}_{G}$ whenever there exist vertices $u \in \K_1$ and $v \in \K_2$ such that $(u,v) \in A$. Note that $\mathfrak{D}_{G}$ is isomorphic to $C_k \square C_l$ (with straight-ahead orientation) where $k$ is the number of maximal column streams and $l$ is the number of maximal row streams.

There is a natural correspondence between $C_n \square C_n$ and the elements of $\mathbb{Z}_n \times \mathbb{Z}_n$. Select an arbitrary vertex $v\in V(C_n\Box C_n)$ and associate it with the element $(0,0)$ in $\mathbb{Z}_n \times \mathbb{Z}_n$. If $(x,y)$ corresponds to a vertex, set its right neighbour to be $(x+1$ mod $n,y)$ and its top neighbour to be $(x,y+1$ mod $n)$. This correspondence allows us to represent each move of the robber or a cop by the addition of a vector in $\{(0,0),(1,0), (0,1), (-1,0), (0,-1)\}$ to its current position.

Given a vertex $v \in \mathbb{Z}_n \times \mathbb{Z}_n$ with $N^+(v) = \{u,w\}$, we can define the sets $$\SD(v) = \{x \in \mathbb{Z}_n \times \mathbb{Z}_n\st x - v = r(u + w - 2v), \textnormal{ for some } r \in \mathbb{Z}\},$$ $$\MD(v) = \{x \in \mathbb{Z}_n \times \mathbb{Z}_n\st x - v = r(u - w), \textnormal{ for some } r \in \mathbb{Z}\}.$$ 
\begin{obs}\label{transversal}
For any vertex $v$ and any line $L$ in $G$, $\SD(v) \cap L \neq \varnothing$ and $\MD(v) \cap L \neq \varnothing$.
\end{obs}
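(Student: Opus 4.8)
The plan is to recognize $\SD(v)$ and $\MD(v)$ as the two diagonal lines through $v$, and then to observe that a diagonal line on the torus meets every row and every column. First I would pin down the direction vectors. Because the orientation is straight-ahead, every line is a directed cycle; hence within the row through $v$ the vertex $v$ has exactly one out-neighbour, and within its column exactly one out-neighbour. Since $v$ has out-degree $2$, these two out-neighbours are precisely $u$ and $w$. Translating to $\mathbb{Z}_n \times \mathbb{Z}_n$, one of $u - v,\, w - v$ is a unit vector of the form $(\epsilon_1, 0)$ and the other of the form $(0, \epsilon_2)$, with $\epsilon_1, \epsilon_2 \in \{+1,-1\}$.

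With this in hand, the two direction vectors become explicit:
$$u + w - 2v = (\epsilon_1, \epsilon_2), \qquad u - w = (\epsilon_1, -\epsilon_2).$$
Each of these is a \emph{diagonal} vector, both of whose coordinates equal $\pm 1$ and are therefore units in $\mathbb{Z}_n$. Thus $\SD(v) = \{v + r(\epsilon_1, \epsilon_2) : r \in \mathbb{Z}\}$ and $\MD(v) = \{v + r(\epsilon_1, -\epsilon_2) : r \in \mathbb{Z}\}$ are exactly the two diagonals emanating from $v$.

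It remains to show that such a diagonal meets an arbitrary line $L$. Each line is the set of vertices having a prescribed value, say $c$, in one of the two coordinates (the fixed coordinate being the first for a row and the second for a column). Writing $v = (v_1, v_2)$ and letting $(d_1, d_2)$ denote the relevant direction (so $(\epsilon_1, \epsilon_2)$ for $\SD$ and $(\epsilon_1, -\epsilon_2)$ for $\MD$), I would solve, for the fixed coordinate $i$, the single congruence $v_i + r\,d_i \equiv c \pmod n$. This is solvable because $d_i = \pm 1$ is invertible modulo $n$, and the resulting point $v + r(d_1,d_2)$ then lies in both the diagonal and $L$, giving the desired nonempty intersection.

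There is no genuine obstacle here: once the first step has correctly identified the out-neighbour directions, the content reduces to the elementary fact that $\pm 1$ is a unit in $\mathbb{Z}_n$, so each diagonal, as $r$ ranges over $\mathbb{Z}$, sweeps through every residue in each coordinate and hence crosses every axis-parallel line. The only point demanding care is that first step, namely the verification that exactly one out-edge of $v$ runs along its row and one along its column; this is precisely where the straight-ahead hypothesis (every line a directed cycle) is invoked.
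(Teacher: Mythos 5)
Your proof is correct: the identification of the two out-neighbour directions as $(\epsilon_1,0)$ and $(0,\epsilon_2)$, hence of the diagonal directions as $(\epsilon_1,\pm\epsilon_2)$, and the invertibility of $\pm 1$ in $\mathbb{Z}_n$ give exactly what is needed. The paper states this as an Observation with no proof at all, and your argument is precisely the intended elementary justification, so there is nothing to compare beyond noting that you have supplied the details the authors omitted.
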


The sets $\SD(v)$ and $\MD(v)$ will be called the \emph{secondary diagonal} and \emph{main diagonal} of $v$, respectively.
Geometrically speaking, if we think of the arcs of the digraph as vectors, $\SD(v)$ is the set of all the vertices of $G$ in the diagonal line through $v$ defined by the sum of the arcs leaving $v$, and $\MD(v)$ is the set of vertices in the line orthogonal to that one. Notice that for every vertex $v \in V$ with $N^+(v) = \{u,w\}$, we have that $u + w - 2v$ is an element of $\{1,-1\}^2$. 
This value will be called the \emph{type} of the vertex, $\tau (v)$, and two vertices will be of opposite types if their types are additive inverses in $\mathbb{Z}^2$. Elements of $\{1,-1\}^2$ will be refered to as \emph{types}. Notice that all the vertices in a conflux $\K$ have the same type, so we can define $\tau(\K) = \tau(v)$ where $v \in \K$.

As an example let us consider Figure \ref{conflux} and assume that $c$ corresponds to $(x,y)$. Now $\tau(c) = (x+1,y) + (x,y-1) - 2(x,y) = (1,-1)$. This is the type of all black vertices in Figure \ref{conflux}.
%\begin{obs}[robber type]\label{robbertype}
%
%Given a type $d \in \brac{1,-1}^2$, we can always assume the robber is in a vertex of type $d$ or $-d$.
%
%\end{obs}

Let $v$ be a vertex in $G$ and $\K$ the maximal conflux containing $v$. We define the \emph{horizontal escape distance} of $v$, $\mathcal{HE}(v)$ as the length of the shortest directed path starting at $v$ and ending at a vertex outside of $\K$ using only horizontal arcs (adding $(\pm 1,0)$). Analogously, we define the \emph{vertical escape distance} of $v$ and denote it with $\mathcal{VE}(v)$. The escape distance of $v$ is $\mathcal{E}(v) = \min\{\mathcal{HE}(v),\mathcal{VE}(v)\}$.

\begin{lemma}

Let $\K_1, \K_2, \K_3$ and $\K_4$ be confluxes of $G$ such that $N^+(\K_1) \cup N^+(\K_3) \subseteq \K_2 \cup \K_4$. If there are cops in the main corners of $\K_2$ and $\K_4$, and the robber is in $\K_1 \cup \K_3$, then the robber will be captured or its movements will be restricted to a stream.
\end{lemma}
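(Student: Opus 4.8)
The plan is to keep the cops passive on the main corners of $\K_2$ and $\K_4$ and let the orientation of $\K_1$ and $\K_3$ funnel the robber into one of those two confluxes, where Lemma~\ref{trap1} finishes the job. First I would record two structural facts in the setting of maximal confluxes (the vertices of $\DG$). Since maximal confluxes are intersections of maximal row-streams with maximal column-streams, and these partition the rows and columns respectively, the four confluxes are pairwise disjoint; moreover, by hypothesis every arc leaving $\K_1$ or $\K_3$ ends in $\K_2\cup\K_4$, so the robber cannot pass directly from $\K_1$ to $\K_3$ or back. Hence while the robber remains in $\K_1\cup\K_3$ he in fact stays within a single one of $\K_1,\K_3$; say $\K_1$.

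Next I would fix the cops' strategy: they simply remain on the main corners of $\K_2$ and $\K_4$, while the standing chasing cop keeps the robber in forced motion. I then analyse the robber while he is confined to $\K_1$. Writing $\tau(\K_1)=(\varepsilon_1,\varepsilon_2)\in\{1,-1\}^2$, every arc internal to $\K_1$ advances the robber by $(\varepsilon_1,0)$ or $(0,\varepsilon_2)$, so along any direction in which $\K_1$ does not wrap around the whole torus the corresponding coordinate changes monotonically and can do so only finitely often before an arc must leave $\K_1$. Since $\K_1$ is a proper conflux, it fails to wrap in at least one direction. Hence if the robber never leaves $\K_1$ he must from some point on use only arcs in a wrap-around direction, which pins him to a single line and therefore to a stream, giving the second alternative of the conclusion; otherwise he is forced out of $\K_1$ after finitely many moves.

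In the remaining case the robber eventually makes a move out of $\K_1$. By the hypothesis $N^+(\K_1)\subseteq\K_2\cup\K_4$ this move lands him in $\K_2$ or $\K_4$, say $\K_2$, while his position one step earlier lies in $\K_1$, which is disjoint from $\K_2$. Consequently his current vertex $p(R)\in V(\K_2)$ has an in-neighbour outside $\K_2$, i.e. $N^-(p(R))\nsubseteq V(\K_2)$. Since cops already occupy both main corners of $\K_2$, the hypotheses of Lemma~\ref{trap1} are exactly met, and that lemma yields capture or restriction of the robber's movements to a stream containing $\K_2$. The same argument applies verbatim with $\K_4$ in place of $\K_2$, and symmetrically with $\K_3$ in place of $\K_1$.

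The step I expect to be the main obstacle is making the confinement argument of the second paragraph fully rigorous, in particular the wrap-around subcase: one must verify that when a conflux spans all of $C_n$ in one direction, the robber's only way to avoid being forced out is to cycle within a single line, and that such a line is indeed contained in a stream in the sense required by the statement. Checking that the cops may genuinely wait on the corners (no move of the robber inside $\K_1$ forces a premature response, and the chasing cop really does prevent idling) is routine but should be stated explicitly.
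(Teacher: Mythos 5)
Your proposal is correct and follows essentially the same route as the paper: the cops wait on the main corners, the forced robber must eventually exit $\K_1\cup\K_3$ into $\K_2\cup\K_4$ (his previous vertex then being an in-neighbour outside that conflux, so the hypothesis $N^-(p(R))\nsubseteq V(\K)$ of Lemma~\ref{trap1} holds), and Lemma~\ref{trap1} finishes. The paper compresses all of this into two sentences, so your explicit verification of the monotone-exit argument and of the in-neighbour condition is simply a more careful write-up of the same proof.
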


\begin{proof}
It is easy to see that if the robber is in $\K_1 \cup \K_3$ and is forced to move, then he will enter $\K_2 \cup \K_4$. Since the main corners of $\K_2$ and $\K_4$ are covered, the result follows from Lemma \ref{trap1}.
\end{proof}

\section{The $k$-regularly oriented grid}

We say that a grid $G = C_n \square C_n$ is \emph{$k$-regularly oriented} if $w(S) = k$ for every maximal stream $S$ in $G$. The cases where $k \in \{1, n\}$ have been covered in \cite{grid}, so in this section we will assume that $G$ is a $k$-regularly oriented grid with $2\leq k < n$. Let $v$ and $w$ be vertices in $G$. We say $w$ is a \emph{main shadow}\index{shadow} of $v$ if:

\begin{enumerate}

\item[\it{i)}] $w \in \MD(v)$,

\item[\it{ii)}] $\tau(v) = \tau(w)$, and

\item[\it{iii)}] $\mathcal{VE}(v) =  \mathcal{HE}(w)$.

\end{enumerate}
We say that $w$ is a \emph{secondary shadow} of $v$ if:

\begin{enumerate}

\item[\it{i)}] $w \in \SD(v)$,

\item[\it{ii)}] $\tau(v) = -\tau(w)$, and

\item[\it{iii)}] $\mathcal{VE}(v) =  \mathcal{HE}(w)$.

\end{enumerate}
Notice that we get equivalent definitions by changing condition {\it iii)} for 

\begin{itemize}

\item[\it{iii')}]$\mathcal{HE}(v) =  \mathcal{VE}(w)$.

\end{itemize}

In the case when $p(R) = v$, we will call $w$ a \emph{main} (or \emph{secondary}) \emph{shadow of the robber}.
We say a vertex $w$ is a \emph{diagonal shadow} of a vertex $v$ if $w$ is a secondary shadow of $v$ or a main shadow of $v$. Again, if $p(R) = v$ we will use the term \emph{diagonal shadow of the robber}. The following result states that if a cop is in a diagonal shadow of the robber and the robber moves, there is a move that the cop can make that keeps him in a diagonal shadow of the robber. Notice that if the type of the vertex the robber is in changes when he moves, the diagonal that the cop must be in will change from secondary to main, or vice versa.

\begin{lemma}\label{closeshadow}
Let $v, u, w \in V(G)$ be vertices such that $N^+(v) = \{u,w\}$ and take $d = w - v$. 
\begin{itemize}
\item If $x$ is a main shadow of $v$, then $y = x + d$ is a diagonal shadow of $u$ and $y \in N^+(x)$.
\item If $x$ is a secondary shadow of $v$, then $y = x - d$ is a diagonal shadow of $u$ and $y \in N^+(x)$.
\end{itemize} 
\end{lemma}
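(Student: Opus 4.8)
The plan is to translate everything into the explicit coordinates on $\mathbb{Z}_n\times\mathbb{Z}_n$ and read off the combinatorial structure forced by $k$-regularity. Since every maximal stream has width $k$, the horizontal out-direction of a vertex $(a,b)$ depends only on $b$ and its vertical out-direction only on $a$; writing these as $\epsilon_1(b),\epsilon_2(a)\in\{1,-1\}$, each is constant on blocks of $k$ consecutive values and flips between consecutive blocks, and $\tau(a,b)=(\epsilon_1(b),\epsilon_2(a))$. Consequently the maximal confluxes are exactly the $k\times k$ coordinate blocks, and for a vertex $p$ in such a block the escape distances $\mathcal{HE}(p),\mathcal{VE}(p)\in\{1,\dots,k\}$ equal the number of steps from $p$ to the exit side of its block in the horizontal, respectively vertical, out-direction. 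I will prove the first bullet (the case where $x$ is a main shadow of $v$); the second is entirely symmetric, with $e+d$ and $\SD$ playing the roles of $e-d$ and $\MD$. Write $e=u-v$ and $d=w-v$, so that $\{e,d\}=\{(\epsilon_1,0),(0,\epsilon_2)\}$ where $\tau(v)=(\epsilon_1,\epsilon_2)$, and split according to whether $d$ is horizontal or vertical.

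Three things must be checked for $y=x+d$: that it is a legal cop move, that it lies on the correct diagonal of $u$, and that the escape-distance condition holds. Legality is immediate: by condition \emph{ii)} we have $\tau(x)=\tau(v)$, so the two out-directions at $x$ are exactly $(\epsilon_1,0)$ and $(0,\epsilon_2)$, one of which is $d$; hence $y=x+d\in N^+(x)$. For the diagonal, condition \emph{i)} gives $x-v=r(u-w)=r(e-d)$ for some $r\in\mathbb{Z}$, and therefore $y-u=(x+d)-(v+e)=(x-v)-(e-d)=(r-1)(e-d)$. Thus $y$ lies on the line through $u$ in direction $e-d$; since $(e+d)\cdot(e-d)=0$, this line is $\MD(u)$ precisely when $\tau(u)=\pm\tau(v)$ and $\SD(u)$ precisely when $\tau(u)=\pm(e-d)$. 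It remains to pin down $\tau(u)$ and $\tau(y)$ and to verify $\mathcal{VE}(u)=\mathcal{HE}(y)$.

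The crux, and the \emph{main obstacle}, is the bookkeeping of type flips. When the robber makes its move $e$ it either stays in its block or crosses a single block boundary, the latter happening exactly when the escape distance in the direction $e$ equals $1$; crossing flips precisely the coordinate of $\tau$ transverse to $e$, and at the same time replaces the transverse escape distance $m$ by $k+1-m$. The identical statement holds for the cop's move $d$. The key point is that conditions \emph{iii)} and \emph{iii')}, namely $\mathcal{VE}(v)=\mathcal{HE}(x)$ and $\mathcal{HE}(v)=\mathcal{VE}(x)$, force the robber and the cop to cross a boundary in lockstep: in the sub-case $d$ horizontal and $e$ vertical, the robber crosses iff $\mathcal{VE}(v)=1$ and the cop crosses iff $\mathcal{HE}(x)=1$, and these coincide by \emph{iii)}; in the sub-case $d$ vertical and $e$ horizontal one uses \emph{iii')} instead. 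When neither crosses, $\tau(u)=\tau(v)=\tau(y)$ and the line above is $\MD(u)$, so $y$ is a main shadow of $u$; when both cross, a short check gives $\tau(u)=\pm(e-d)$ and $\tau(y)=-\tau(u)$, the line is $\SD(u)$, and $y$ is a secondary shadow of $u$. Either way $y$ is a diagonal shadow of $u$.

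Finally, the escape-distance equality $\mathcal{VE}(u)=\mathcal{HE}(y)$ falls out of the same dichotomy. In each sub-case the two quantities $\mathcal{VE}(u)$ and $\mathcal{HE}(y)$ are affected by the same operation: either both by the decrement-or-reset governing a move along its own direction, or both by the involution $m\mapsto k+1-m$ governing a transverse crossing. Hence the equality $\mathcal{VE}(v)=\mathcal{HE}(x)$ supplied by \emph{iii)} propagates to $\mathcal{VE}(u)=\mathcal{HE}(y)$ in all cases. Assembling these facts, $y=x+d$ satisfies \emph{i)}--\emph{iii)} with respect to $u$, which finishes the main-shadow case; the secondary-shadow case is identical after replacing $d$ by $-d$ and interchanging the roles of $\MD$ and $\SD$.
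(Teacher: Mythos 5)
Your proof is correct. Note that the paper itself does not give a proof of Lemma \ref{closeshadow} --- it is explicitly left to the reader, with only Figure \ref{fig:new} as guidance (a draft verification by direct coordinate substitution exists, which splits on whether $\tau(u)=\tau(v)$ and asserts, without argument, that this happens if and only if $\tau(y)=\tau(x)$ and that condition \emph{iii)} propagates ``by $k$-regularity''). Your block-crossing formulation performs the same underlying case analysis, but it supplies exactly the two pieces the draft glosses over: the lockstep observation that the robber crosses a block boundary precisely when the cop does (forced by \emph{iii)} or \emph{iii')} depending on whether $d$ is horizontal or vertical), and the fact that $\mathcal{VE}$ and $\mathcal{HE}$ undergo the same transformation (decrement, reset to $k$, identity, or $m\mapsto k+1-m$) in each of the four sub-cases, so that the escape-distance equality propagates. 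The algebraic identity $y-u=(r-1)(u-w)$ (resp.\ $(r-1)\tau(v)$) for condition \emph{i)} is the same in both treatments. In short, your argument is a complete and slightly more careful version of what the paper intends; the only cosmetic caution is that your phrase ``both by the decrement-or-reset'' should be read as including the case where neither quantity changes (the transverse, non-crossing sub-case), which your case enumeration does in fact cover.
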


The proof is left to the reader. Figure \ref{fig:new} is added for the guidance.

\begin{figure}[htb]
\begin{center}
\includegraphics[totalheight=0.28\textheight]{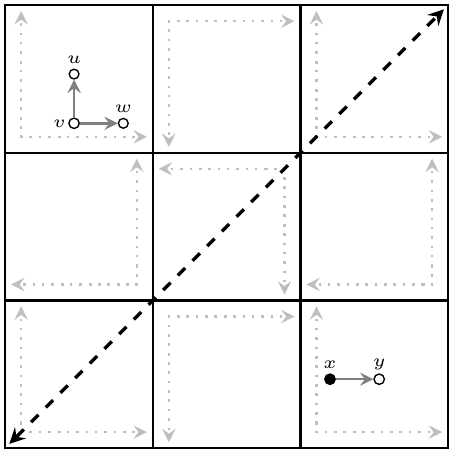}\hspace{1cm}
\includegraphics[totalheight=0.28\textheight]{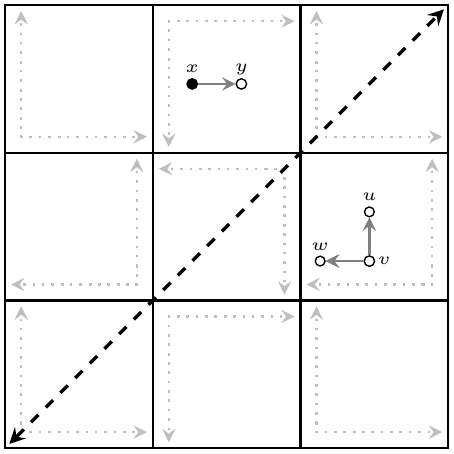}
\caption{Proof of Lemma \ref{closeshadow}. Note that in the left figure, $x$ is a main shadow of $v$, and $y$ is a main shadow of $u$ (they have the same type). In the right figure, $x$ is a secondary shadow of $v$, and $y$ is a secondary shadow of $u$ (they have opposite types).} \label{fig:new}
\end{center}
\end{figure}

The next lemma will give us another way of restricting the robber's moves in the case of a $k$-regularly oriented grid. 

\begin{lemma}\label{bisector}

Let $u,v,s,t,x,y$ be vertices such that $u \neq v$, $v$ is a diagonal shadow of $u$, $s \in N^+(u)$, $t \in N^+(v)$ and is a diagonal shadow of $s$, $x$ is the intersection of the row of $u$ and the column of $v$, and $y$ is the intersection of the row of $s$ and the column of $t$. If $d \in \{1,-1\}^2$ is orthogonal to $v-u$, then there exists an integer $r$ such that $x = y +rd$.

\end{lemma}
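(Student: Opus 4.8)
The plan is to reduce everything to coordinates in $\mathbb{Z}_n \times \mathbb{Z}_n$ and read off $x$ and $y$ directly. Writing $u = (u_1, u_2)$, $v = (v_1, v_2)$, $s = (s_1, s_2)$ and $t = (t_1, t_2)$, the two intersection points become $x = (u_1, v_2)$ (its first coordinate comes from the row of $u$, its second from the column of $v$) and $y = (s_1, t_2)$. Hence $x - y = (u_1 - s_1,\, v_2 - t_2)$, and the whole lemma reduces to showing that this single vector is an integer multiple of $d$. So the proof consists of computing the two scalar differences $u_1 - s_1$ and $v_2 - t_2$ and checking that the resulting vector points along $d$.

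First I would record the local picture at $u$. If $\tau(u) = (a,b)$ with $a,b \in \{1,-1\}$, then $N^+(u) = \{u + (a,0),\, u + (0,b)\}$, so the robber's move $s \in N^+(u)$ is either horizontal, $s = u+(a,0)$, giving $u_1 - s_1 = -a$, or vertical, $s = u + (0,b)$, giving $u_1 - s_1 = 0$. The second coordinate $v_2 - t_2$ is governed by Lemma \ref{closeshadow}: letting $d_{\mathrm{move}}$ denote the difference from $u$ to the out-neighbour that was \emph{not} chosen, that lemma forces $t = v + d_{\mathrm{move}}$ when $v$ is a main shadow of $u$ and $t = v - d_{\mathrm{move}}$ when $v$ is a secondary shadow, so in every case $t - v = \pm d_{\mathrm{move}}$, whence $v_2 - t_2 = \mp (d_{\mathrm{move}})_2$.

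The key dichotomy is then transparent. When the robber moves vertically, $d_{\mathrm{move}} = (a,0)$ is horizontal, so $v_2 - t_2 = 0$ while also $u_1 - s_1 = 0$; thus $x = y$ and we take $r = 0$. When the robber moves horizontally, $d_{\mathrm{move}} = (0,b)$ and $u_1 - s_1 = -a$, so $x - y$ equals $(-a,-b)$ or $(-a,b)$ according as $v$ is a main or a secondary shadow of $u$. It remains to match this against $d$. If $v$ is a main shadow then $v - u$ is parallel to the main-diagonal direction of $u$, namely $(a,-b)$, so the orthogonal vector satisfies $d \parallel (a,b)$ and $x - y = -(a,b)$ is $\pm d$; if $v$ is a secondary shadow then $v - u \parallel (a,b)$, so $d \parallel (a,-b)$ and $x - y = -(a,-b)$ is again $\pm d$. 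In all cases $x - y \in \{0, \pm d\}$, so the required integer $r$ exists (indeed $r \in \{-1,0,1\}$).

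I expect the only real work to be bookkeeping: correctly identifying which out-neighbour of $u$ plays the role of the chosen move and which defines $d_{\mathrm{move}}$ in Lemma \ref{closeshadow}, and keeping the signs $a,b$ consistent across the two shadow types. There is no conceptual obstacle beyond this sign-tracking, because the orthogonality hypothesis on $d$ is exactly what pairs the main-shadow case with $v-u \parallel (a,-b)$ and the secondary-shadow case with $v-u \parallel (a,b)$, and the freedom to take $r$ negative absorbs the sign ambiguity in $d$.
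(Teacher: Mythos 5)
Your proof is correct and takes essentially the same route as the paper's: write $x=(u_1,v_2)$, $y=(s_1,t_2)$ in coordinates, split on whether the robber's step is horizontal or vertical, and conclude that $x-y$ is either zero or a single type vector (the fact that you and the paper attach the case $x=y$ to opposite directions of the robber's move is an artifact of the paper's nonstandard row/column convention and does not affect the conclusion). You are in fact more careful than the paper in the nonzero case: the paper stops at $(u-s)+(v-t)\in\{1,-1\}^2$, which by itself does not rule out this vector being parallel to $\tau(u)$ rather than to $d$, whereas your final paragraph --- pairing the main-shadow case with $v-u\parallel(a,-b)$ and the secondary case with $v-u\parallel(a,b)$ via Lemma~\ref{closeshadow} --- supplies exactly that missing verification.
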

 
\begin{proof}
Let $A = \{(1,0), (-1,0)\}$ and $B = \{(0,1), (0,-1)\}$. If $u - s \in A$, then $v - t \in B$, and therefore $x = y$ and $r = 0$. If $u - s \in B$, then $v - t \in A$. In this case, we can see that $y = x + (u - s) + (v - t)$. Since $(u - s) + (v - t) \in \{1,-1\}^2$, we get that there is an integer $r\in \{-1,0,1\}$ such that $x = y +rd$.
\end{proof}
 
\begin{figure}[ht!]
\begin{center}
\includegraphics[totalheight=0.28\textheight]{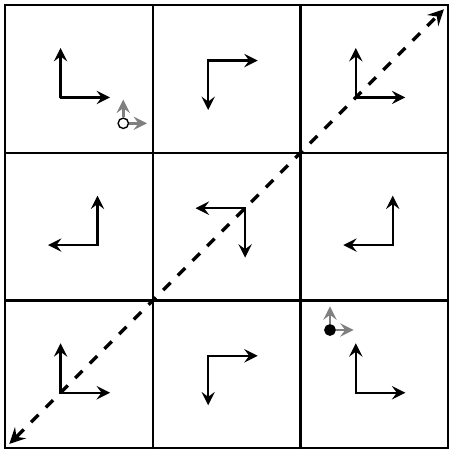}\hspace{1cm}
\includegraphics[totalheight=0.277\textheight]{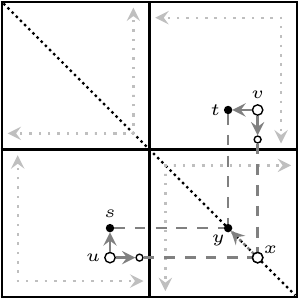}
\vspace{1mm}

%\node {\small (a) \hspace{5.9cm} (b)}
{\small (a) \hspace{5.9cm} (b)}
\caption{(a) The robber (white vertex $u$) is not able to cross the mirror line (dashed line) or he will be caught by the cop (black vertex $v$). (b) One of the possible cases for Lemma \ref{bisector}.} \label{diagonal}
\end{center}
\end{figure}

%\begin{figure}[h!]
%\begin{center}
%\includegraphics[totalheight=0.3\textheight]{maindiagonal}
%\includegraphics{diagonal}
%\caption{The robber (red) is not able to cross the mirror line (dashed line) or he will be caught by the cop (blue).} \label{maindiagonal}
%\end{center}
%\end{figure}

The geometric interpretation of Lemma \ref{bisector} is key (see Figure \ref{diagonal}b): If a cop moves in such a way that he remains in a diagonal shadow of the robber, then there exists a set of vertices that touches every line in $G$ (Observation \ref{transversal}) that the robber cannot step on (and therefore cannot cross it) without being captured. 
This set corresponds to the vertices in the orthogonal bisector of the ``line segment'' from $u$ to $v$ and is shown in the figure as the black dotted diagonal line through $x$ and $y$. We will refer to this line as the \emph{mirror} of the corresponding shadow of the robber. Two mirrors $\ell$ and $\ell'$ are parallel if the two types of their vertices are the same. 

Recall that we are working on a $k$-regularly oriented grid. Given two parallel mirror lines $\ell$ and $\ell'$, and $d$ a type orthogonal to the type of a vertex in $\ell$, the distance between $\ell$ and $\ell'$ , denoted by $\A(\ell,\ell')$, is the minimum positive integer $m$ such that $\ell = \ell' + mkd$ or $\ell = \ell' - mkd$. Notice that $\A(\ell,\ell') \geq 2$ for any two different parallel mirrors, $\ell$ and $\ell'$.

Let $v$ be a vertex of $G$, $s \in \mathbb{N}$ and $d$ a type orthogonal to $\tau(v)$. We define $$\MD_s(v,d) = \{x \in V(G) \st x - \frac{s(\tau(v)+d)}{2} \in \MD(v)\}.$$ Because of Observation \ref{transversal}, we know that every vertex $u \in G$ is in $\MD_s(v,d)$ for some $s \in \mathbb{N}$ and some type $d$. 
Given two vertices of opposite types, $u$ and $v$, the \emph{diagonal distance}, which we will denote by $\dd(u,v)$, between $u$ and $v$ is the minimum $t \in \mathbb{N}$ such that $v \in \MD_t(u,d) \cup \MD_t(u,-d)$ (or equivalently, $u \in \MD_t(v,d) \cup \MD_t(v,-d)$). 
Notice that $v \in \MD_t(u,d)$ if and only if $u \in \MD_t(v,-d)$. With $t=\dd(u,v)$, we define 
$$
  \B(u,v) = \bigcup_{i=0}^{t} \MD_i(u,d),
$$ 
where $d$ has been chosen so that $v \in \MD_{t}(u,d)$. Notice that if $\K$ and $\K'$ are confluxes of $G$ such that $\tau(\K) = -\tau(\K')$, then $\B(\K,\K')$ and $\dd(\K, \K')$ are defined in $\DG$.

\begin{obs}
\label{evendd}
If $\K$ and $\K'$ are maximal confluxes of a $k$-regularly oriented grid $G$, and $\tau(\K) = -\tau(\K')$, then we have $\dd(\K,\K') < \frac{n}{k}$ and $\dd(\K,\K') \in 2\mathbb{Z}$.
\end{obs}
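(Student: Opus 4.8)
The plan is to carry the whole computation into the conflux digraph $\DG$, where $\dd(\K,\K')$ actually lives. Recall that $\DG \cong C_m \square C_m$ with $m = n/k$, the common number of maximal streams in each direction. The first step is to record the structural feature of this orientation that drives everything: two consecutive maximal streams of $G$ must have opposite orientations, since otherwise they would merge into a single wider stream, contradicting maximality. Hence consecutive lines of $\DG$ alternate direction, so $\DG$ is itself $1$-regularly oriented; and for an alternating (period-$2$) pattern to close up around $C_m$, the integer $m = n/k$ must be even. In coordinates on $\mathbb{Z}_m \times \mathbb{Z}_m$ this says that $\tau$ is a ``checkerboard'': the type of a vertex depends only on the parities of its two coordinates, adjacent vertices have types differing in exactly one sign, and stepping along a main diagonal (adding the direction orthogonal to $\tau$) flips both signs, i.e.\ negates the type.

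For the bound $\dd(\K,\K') < n/k$, I would argue that the diagonals $\MD_0(\K,d), \MD_1(\K,d), \dots, \MD_{m-1}(\K,d)$ partition $V(\DG)$. Each is a translate of $\MD(\K)$ by a multiple of the shift vector $\tfrac12(\tau(\K)+d)$; there are exactly $m$ distinct ones (they repeat with period $m$ in the index), each contains $m$ vertices, and $m \cdot m = |V(\DG)|$, so they are pairwise disjoint and cover everything. Consequently $\K'$ lies in $\MD_t(\K,d)$ for some $t \in \{0,\dots,m-1\}$, giving $\dd(\K,\K') \le m-1 < m = n/k$.

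For $\dd(\K,\K') \in 2\mathbb{Z}$ I would use the checkerboard structure to track types across this family. Negation of types is an involution pairing the four types into two classes, $\{\tau(\K),-\tau(\K)\}$ and the two types orthogonal to $\tau(\K)$ (which are also negatives of one another). Since one step along any diagonal negates the type, each $\MD_t(\K,d)$ carries types from a single class; and since one step in the index $t$ shifts by $\tfrac12(\tau(\K)+d)$, flipping exactly one of the two signs, consecutive diagonals carry different classes. Hence the even-indexed diagonals carry $\{\tau(\K),-\tau(\K)\}$ and the odd-indexed ones carry the orthogonal pair. As $\tau(\K') = -\tau(\K)$ lies in the first class, $\K'$ can only sit on an even-indexed diagonal, so $\dd(\K,\K')$ is even. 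Equivalently, one checks directly that $\K' \in \MD_t(\K,d)\cup\MD_t(\K,-d)$ forces $t \equiv \alpha\Delta_1 + \beta\Delta_2 \pmod m$ with $\tau(\K) = (\alpha,\beta)$ and $\K'-\K = (\Delta_1,\Delta_2)$; opposite types make $\Delta_1,\Delta_2$ both odd, and $m$ is even, so this residue is even.

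The main obstacle is purely bookkeeping: setting up coordinates on $\DG$ so that the checkerboard description of $\tau$ is provably correct, and then keeping the signs straight when tracking how $\tau$ changes both along a single diagonal and across the family indexed by $t$. In particular I would want to confirm that replacing $d$ by $-d$ leaves the relevant residue (hence the parity) unchanged, so that the minimum in the definition of $\dd$ cannot slip below an even value. Once the checkerboard fact and the ``one step negates the type'' fact are nailed down, both conclusions follow in a few lines.
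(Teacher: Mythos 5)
Your proposal is correct. Note that the paper states this as an Observation with no proof at all, so there is nothing to compare against; your write-up supplies the missing argument. The two load-bearing facts are exactly the ones you isolate: (i) consecutive maximal streams must have opposite orientations (otherwise they would merge into a stream of width $2k$, contradicting $k$-regularity), so $\DG\cong C_m\square C_m$ with $m=n/k$ is alternately oriented and $m$ must be even for the alternation to close up; and (ii) the sets $\MD_0(\K,d),\dots,\MD_{m-1}(\K,d)$ are the $m$ cosets of the cyclic subgroup $\langle u-w\rangle$ of $\mathbb{Z}_m\times\mathbb{Z}_m$, hence partition $V(\DG)$ and give $\dd(\K,\K')\le m-1<n/k$. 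The parity bookkeeping is also sound: one step along a main diagonal flips both coordinate parities (negating the type), one step in the index $t$ flips exactly one (swapping type classes), and both choices $\pm d$ shift by a unit coordinate vector, so the residue of $t$ forced by $\tau(\K')=-\tau(\K)$ is even regardless of the sign of $d$; the evenness of $m$, which you correctly flag, is what makes the parity of $t$ well defined modulo $m$.
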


For the rest of the section, all the confluxes will be assumed to be maximal. For the lemmas \ref{land0}, \ref{land1}, \ref{land2} and \ref{land3},  $\K_1$ and $\K_2$ will denote  confluxes, whose main corners are covered by cops.

\begin{lemma}\label{land0}

Suppose $\tau(\K_1) = -\tau(\K_2)$ and that the robber is in a conflux in $V(\DG) - \B(\K_1,\K_2)$ whose type is orthogonal to $\tau(\K_1)$. If the robber enters $\B(\K_1,\K_2)$, then the cops can capture a main shadow of the robber.

\end{lemma}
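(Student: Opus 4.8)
The plan is to exploit the ``mirror'' mechanism established by Lemma \ref{bisector} together with the shadow-tracking mechanism of Lemma \ref{closeshadow}. The key geometric idea is that $\B(\K_1,\K_2)$ is the union of the main diagonals $\MD_i(\K_1,d)$ for $0 \le i \le t$, where $t = \dd(\K_1,\K_2)$, so it is a ``band'' of diagonals lying between $\K_1$ and $\K_2$, bounded on one side by (the diagonals through) $\K_1$ and on the other by those through $\K_2$. Since the robber's conflux has type orthogonal to $\tau(\K_1)$, the robber moves so that its type alternates with each step, and hence its diagonal shadows alternate between main and secondary — which is exactly the setting in which Lemma \ref{closeshadow} lets a cop stay in a diagonal shadow of the robber move-for-move. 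First I would set up a cop on a diagonal shadow of the robber at the moment the robber enters $\B(\K_1,\K_2)$: because the main corners of both $\K_1$ and $\K_2$ are already occupied, and because every line meets every main/secondary diagonal (Observation \ref{transversal}), at least one of the corner cops can be placed into (or already lies in) a diagonal shadow of the robber's entry vertex.

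Next I would track the robber with this cop using Lemma \ref{closeshadow}: each time the robber moves from $v$ to $u \in N^+(v)$, the cop moves from its current shadow $x$ to $x \pm d$ (with $d = w-v$), landing on a diagonal shadow $y$ of $u$ with $y \in N^+(x)$. This maintains, by Lemma \ref{bisector}, an active mirror line between cop and robber that the robber cannot cross without being captured. The crucial point is to argue that while the robber remains in $\B(\K_1,\K_2)$, this mirror confines it. Because $\A(\ell,\ell') \ge 2$ for distinct parallel mirrors and, by Observation \ref{evendd}, $\dd(\K_1,\K_2) < n/k$ is even, the band $\B(\K_1,\K_2)$ is narrow relative to the grid, so the diagonals through $\K_1$ and through $\K_2$ act as the two reflecting boundaries. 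The cop, starting in a shadow, can therefore shepherd the robber toward one of these boundary confluxes whose main corners are covered.

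Then I would conclude by a cornering argument: the mirror line, anchored at the covered main corners of $\K_1$ and $\K_2$, steadily advances across $\B(\K_1,\K_2)$ as the robber is forced to move (recall we always assume one cop chases the robber, so it cannot stay still). Since $\B$ has bounded diagonal width, after finitely many moves the cop's shadow and the robber must coincide in a single diagonal $\MD_i$, at which point the cop occupies a main shadow of the robber exactly as claimed; alternatively the robber is caught outright when it attempts to step onto the mirror.

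The hard part will be the bookkeeping in the second paragraph: verifying that the shadow the cop maintains stays \emph{inside} (or on the boundary of) $\B(\K_1,\K_2)$ as the robber wanders within the band, so that the mirror never ``wraps around'' the torus and opens an escape on the far side. This is where the evenness and the bound $\dd(\K_1,\K_2) < n/k$ from Observation \ref{evendd}, together with $\A(\ell,\ell') \ge 2$, must be used carefully to rule out the robber slipping past a mirror by toroidal wraparound; the $k$-regularity is what keeps the mirror spacing uniform and makes this containment argument go through cleanly.
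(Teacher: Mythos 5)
Your proposal identifies the right geometric picture (the band $\B(\K_1,\K_2)$ is bounded by the diagonals through $\K_1$ and $\K_2$, so the robber must cross $\MD(\K_1)\cup\MD(\K_2)$ to get in), but it misses the actual mechanism by which a shadow gets captured, and this is the whole content of the lemma. You assert in your first step that ``at least one of the corner cops can be placed into (or already lies in) a diagonal shadow of the robber's entry vertex,'' citing Observation \ref{transversal}. That observation only says that diagonals meet every line; it does not put a cop on the specific vertex $w$ satisfying $w\in\MD(v)$, $\tau(w)=\tau(v)$, $\mathcal{VE}(v)=\mathcal{HE}(w)$. A cop sitting on a main corner of $\K_1$ is in general \emph{not} at the shadow vertex. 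The paper's argument is different and is the step you are missing: when the robber enters a conflux $\K\in\MD(\K_1)$ (say), its main shadow is a vertex \emph{inside} $\K_1$, and by Lemma \ref{closeshadow} this shadow moves like a virtual robber, step for step with the real one. Since the main corners of $\K_1$ are occupied, one applies Lemma \ref{trap1} \emph{to the shadow}, treating it as the robber inside a guarded conflux; that is how a cop actually comes to occupy the shadow. Without this, your proof begins by assuming its own conclusion.

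The rest of your proposal (maintaining the shadow via Lemma \ref{closeshadow}, the mirror of Lemma \ref{bisector}, the ``advancing mirror'' and cornering argument, the appeal to Observation \ref{evendd} and toroidal wraparound) is machinery that belongs to the \emph{later} lemmas (\ref{six}, \ref{twelve}, Theorem \ref{k-regular}), which use an already-captured shadow to confine the robber. None of it is needed here, and some of it is not quite right as stated: for instance, the robber's type does not ``alternate with each step'' in a $k$-regularly oriented grid — all vertices of a conflux share a type, so the type changes only when the robber crosses a conflux boundary. I would rewrite the proof to consist of exactly the one observation above: entering $\B(\K_1,\K_2)$ forces the robber into $\MD(\K_1)\cup\MD(\K_2)$, whence its main shadow enters $\K_1\cup\K_2$, and Lemma \ref{trap1} applied to that shadow finishes the job.
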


\begin{proof}

If the robber is in $V(\DG) - \B(\K_1,\K_2)$, in order to enter $\B(\K_1,\K_2)$ the robber must enter a conflux $\K \in \MD(\K_1) \cup \MD(\K_2)$. In either case, a main shadow of the robber will enter $\K_1 \cup \K_2$. Since the main corners of both $\K_1$ and $\K_2$ are covered by cops, we capture a main shadow of the robber by applying Lemma \ref{trap1} to the corresponding shadow.
\end{proof}

\begin{lemma}\label{land1}

Let $d$ be a type orthogonal to $\tau(\K_1)$, and suppose that $\K_2$ satisfies $\tau(\K_1) = -\tau(\K_2)$ and $\dd(\K_1,\K_2) = 2$. If the robber is in $\B(\K_1,\K_2)$, then we can force him to move (using an extra cop, a total of 5) to a conflux outside of $\B(\K_1,\K_2)$.

\end{lemma}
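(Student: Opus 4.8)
The plan is to work entirely in the conflux digraph $\DG$, where $\B(\K_1,\K_2)$ is the union of the three consecutive diagonal layers $\MD_0(\K_1,d)=\MD(\K_1)$, $\MD_1(\K_1,d)$ and $\MD_2(\K_1,d)=\MD(\K_2)$. First I would record the type bookkeeping. Since $\DG\cong C_k\square C_l$ is itself a straight-ahead (width-one) orientation, each single step in $\DG$ flips exactly one coordinate of the type; hence the confluxes on the even layers $\MD_0,\MD_2$ all carry type $\pm\tau(\K_1)$, while those on the middle layer $\MD_1$ carry type orthogonal to $\tau(\K_1)$. As in the situation of Lemma \ref{land0} the robber's type is orthogonal to $\tau(\K_1)$, so he sits on $\MD_1$, and every forced move carries him onto a boundary layer $\MD_0$ or $\MD_2$; from a boundary layer his next move is either back to $\MD_1$ (staying in $\B$) or onto $\MD_{-1}$ or $\MD_3$ (leaving $\B$). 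Thus the whole task reduces to preventing the robber, once on a boundary layer, from returning to the middle.

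Next I would bring in the fifth cop together with the mirror principle. By Lemma \ref{closeshadow} a cop that begins in a diagonal shadow of the robber can remain in a diagonal shadow indefinitely, and by Lemma \ref{bisector} this maintains a mirror line that the robber can never cross (Observation \ref{transversal} guarantees the mirror meets every line). I would take the shadow to be a secondary shadow, so that the mirror runs \emph{parallel} to the strip and separates two adjacent layers, and then use the freedom in Lemma \ref{closeshadow} (the $r\in\{-1,0,1\}$ of Lemma \ref{bisector}) to advance the mirror one diagonal step at a time across the strip, pushing the robber ahead of it. Because the strip is only three layers wide and $\dd(\K_1,\K_2)=2<n/k$ (Observation \ref{evendd}), only boundedly many advances are required before the mirror has been swept past a boundary layer, at which point the robber's forced move must cross a boundary and land on $\MD_{-1}$ or $\MD_3$, i.e.\ outside $\B$.

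The four corner cops seal the two places where this sweep could otherwise fail: where the robber might try to slip across a boundary layer through $\K_1$ or $\K_2$ themselves. With the main corners of both confluxes guarded, Lemma \ref{trap1} (as packaged in Lemma \ref{land0}) shows that any such attempt drives a shadow of the robber into $\K_1\cup\K_2$, where it is trapped and converted into the capture of a shadow. Hence at every turn the robber is either captured or advanced one step along the forced sequence that terminates outside $\B$.

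The main obstacle I anticipate is the simultaneous bookkeeping of the sweep. I must check that on each turn the fifth cop can make a legal move that both keeps it a genuine shadow (so the mirror persists) and nets a strict advance of the mirror toward the robber, while the robber is meanwhile oscillating between the middle and boundary layers and is free to run along the strip around the torus. The crux is to prove that the region on the robber's side of the mirror strictly shrinks each time he is on a boundary layer, so that the robber cannot leak back into the middle indefinitely and the process terminates in finitely many moves; this is precisely where the thinness $\dd(\K_1,\K_2)=2$ of the strip is essential.
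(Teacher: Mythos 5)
Your first paragraph correctly sets up the three-layer structure of $\B(\K_1,\K_2)$ and correctly reduces to the moment when the robber, forced off the middle layer $\MD_1$, lands on a boundary layer. But you then declare that "the whole task reduces to preventing the robber, once on a boundary layer, from returning to the middle," and this is where you part ways with the actual situation: there is nothing to prevent. The paper's proof is exactly the observation you stopped short of. When the robber leaves a middle-layer conflux (type $\pm d$), the two candidate confluxes $\K$ have types $\tau(\K_1)$ and $-\tau(\K_1)$ respectively, and the sign of $\tau(\K)$ determines \emph{which} boundary layer $\K$ lies in: if $\tau(\K)=\tau(\K_1)$ then $\K\in\MD(\K_2)$, and if $\tau(\K)=-\tau(\K_1)$ then $\K\in\MD(\K_1)$. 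In either case the out-direction $\tau(\K)$ of that conflux points away from the middle layer, so both out-neighbours of $\K$ in $\DG$ lie outside $\B(\K_1,\K_2)$; the robber's next forced move exits the strip automatically. No cop other than the chaser is needed for this lemma.

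The apparatus you build in its place has concrete problems beyond being unnecessary. First, it presupposes a cop already sitting in a diagonal shadow of the robber; establishing such a shadow is the content of Lemma \ref{six}, which costs seven cops (you have five) and whose proof \emph{uses} Lemma \ref{land1}, so invoking shadow capture here is circular. Second, the "advance the mirror one diagonal step at a time" move is not something Lemma \ref{closeshadow} provides: that lemma prescribes the unique response that keeps the cop in a shadow, and the $r\in\{-1,0,1\}$ of Lemma \ref{bisector} is determined by the configuration rather than being a degree of freedom the cop can exploit to sweep the mirror. Third, you yourself flag that the termination of the sweep is unproven; since the entire sweep is addressed at a behaviour (the robber bouncing back from a boundary layer to $\MD_1$) that the type bookkeeping already rules out, the right fix is to push your first paragraph one step further rather than to repair the sweep.
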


\begin{proof}

If the robber is at a vertex of type $d$ or $-d$ and is in $\B(\K_1,\K_2)$, then by forcing him to move he must enter a conflux $\K$ whose type is parallel to $\tau(\K_1)$.
If $\tau(\K) = \tau(\K_1)$, then $\K \in \MD(\K_2)$, and so by forcing him to move he will exit $\B(\K_1,\K_2)$. If $\tau(\K) = -\tau(\K_1)$, then we have $\K \in \MD(\K_1)$, so he will exit $\B(\K_1,\K_2)$ when we force him to move.
\end{proof}

\begin{lemma}\label{land2}

Let $\K_1, \K_2$ and $d$ be the same as in the hypothesis of Lemma \ref{land1} and take $\K_3$ such that $\tau(\K_3) = \tau(\K_1)$ and $\K_3 = \K_2 +\tau(\K_1) + r'd$ in $\mathfrak{D}_{G}$ for some $r' \in \mathbb{Z}$.
If the robber is in $\B(\K_1,\K_3)\setminus\B(\K_1,\K_2)$, then we can force him to move to a vertex out of $\B(\K_1,\K_3)$ or we capture his main shadow.
\end{lemma}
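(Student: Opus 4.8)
The plan is to reduce the statement to the forcing argument of Lemma \ref{land1} applied to the thinner band lying just beyond $\K_2$, and then to read off the two alternatives according to which boundary of that band the robber is pushed across.

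First I would track confluxes by their main-diagonal index relative to $\K_1$. Writing $d$ for the type orthogonal to $\tau(\K_1)$ with $\K_2\in\MD_2(\K_1,d)$, every conflux lies on exactly one $\MD_s(\K_1,d)$, and passing to an adjacent conflux changes $s$ by exactly $\pm1$. Using Observation \ref{evendd} together with $\tau(\K_3)=\tau(\K_1)$ and $\K_3=\K_2+\tau(\K_1)+r'd$, I would check that $\K_3$ sits two diagonals beyond $\K_2$, i.e. $\dd(\K_1,\K_3)=4$ and $\MD(\K_3)=\MD_4(\K_1,d)$ independently of $r'$. Hence $\B(\K_1,\K_3)=\bigcup_{i=0}^{4}\MD_i(\K_1,d)$ and
$$\B(\K_1,\K_3)\setminus\B(\K_1,\K_2)=\MD_3(\K_1,d)\cup\MD_4(\K_1,d),$$
which is exactly $\B(\K_2,\K_3)\setminus\MD(\K_2)$. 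I would also record the parity/type dictionary: even-index diagonals carry the types $\pm\tau(\K_1)$ (so $\MD_2=\MD(\K_2)$ and $\MD_4=\MD(\K_3)$ are ``parallel'' diagonals), while the odd diagonal $\MD_3$ carries the orthogonal types $\pm d$.

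Next, since $\tau(\K_2)=-\tau(\K_3)$, $\dd(\K_2,\K_3)=2$, and $d$ is orthogonal to $\tau(\K_2)$ as well, the pair $(\K_2,\K_3)$ meets the geometric hypotheses of Lemma \ref{land1}, and the robber currently lies in $\B(\K_2,\K_3)$ but off its boundary diagonal $\MD(\K_2)$. I would then run the forcing procedure of Lemma \ref{land1} on this band using the extra (fifth) cop: because a forced move always changes $s$ by $\pm1$ and the robber cannot remain on the orthogonal diagonal $\MD_3$, the extra cop is installed exactly as in Lemma \ref{land1} (as a diagonal shadow via Lemma \ref{closeshadow}, whose mirror from Lemma \ref{bisector} blocks a return) to prevent the robber from oscillating between $\MD_3$ and $\MD_4$ forever, so that he is eventually driven across one of the two boundary diagonals $\MD(\K_2)$ or $\MD(\K_3)$.

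Finally I would split on which boundary is crossed. If the robber is pushed across $\MD(\K_3)$, his move lands him on $\MD_5(\K_1,d)$, outside $\B(\K_1,\K_3)$, which is the first alternative. If instead he is pushed back across $\MD(\K_2)$, then he must first step from the orthogonal diagonal $\MD_3$ onto $\MD(\K_2)$; this is precisely a robber of type orthogonal to $\tau(\K_1)$, sitting in $V(\DG)-\B(\K_1,\K_2)$, entering $\B(\K_1,\K_2)$, so Lemma \ref{land0} applies verbatim and we capture a main shadow, the second alternative. The main obstacle is the forcing step: I must verify that Lemma \ref{land1}'s mechanism goes through with trap-cops present on only one side of the band (on $\K_2$ but not on $\K_3$). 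This is harmless, since the unguarded side $\MD(\K_3)$ borders the acceptable outcome ``the robber leaves $\B(\K_1,\K_3)$'', so no cops are needed there, while the guarded side $\MD(\K_2)$ is exactly where Lemma \ref{land0} supplies the shadow capture. The remaining work is the bookkeeping above, confirming the parity/type dictionary so that a forced move off $\MD_3$ lands on $\MD(\K_2)$ or $\MD(\K_3)$ with the correct type.
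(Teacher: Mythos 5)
Your proof is correct and follows essentially the same route as the paper's: force the robber off the orthogonal diagonal $\MD_3(\K_1,d)$; if he enters a conflux of $\MD(\K_3)$ he is then necessarily pushed to $\MD_5(\K_1,d)$, outside $\B(\K_1,\K_3)$, while if he instead enters $\MD(\K_2)\subseteq\B(\K_1,\K_2)$ then Lemma \ref{land0} captures a main shadow. One small correction: the oscillation between $\MD_3$ and $\MD_4$ that you guard against cannot occur and needs no shadow cop or mirror (the extra cop in Lemma \ref{land1} is only a chaser forcing movement) --- the conflux of $\MD_4$ reached from $\MD_3$ always has type $\tau(\K_1)$, so both of its out-edges lead to $\MD_5$.
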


\begin{figure}[htb]
\begin{center}
\includegraphics[totalheight=0.4\textheight]{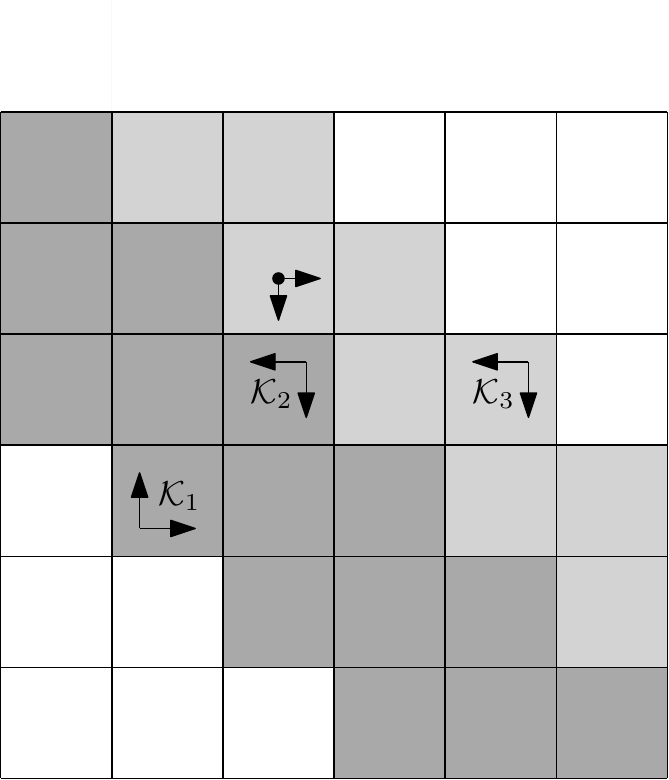}
\caption{Illustration of the situation treated in Lemma \ref{land2}. The robber (black circle) is in $\B(\K_1,\K_3)\setminus\B(\K_1,\K_2)$. The dark squares form $\B(\K_1,\K_2)$, all shaded squares form $\B(\K_1,\K_3)$.} \label{fig:new2}
\end{center}
\end{figure}

\begin{proof}
The situation in this lemma is shown in Figure \ref{fig:new2}, in which the robber is represented by the black circle. (Note that $\K_1$ and $\K_2$ must ``point towards each other" since otherwise $\B(\K_1,\K_3)\setminus\B(\K_1,\K_2)$ would be empty.) We may assume that the robber is at a vertex of type $d$ or $-d$. Note that $d$ is orthogonal to $\tau(\K_1)$. Thus, by forcing the robber to move, he will eventually enter a neighboring conflux $\K$, whose type is parallel to $\tau(\K_1)$. If $\K \in \MD(\K_3)$, the robber is forced to move out of $\B(\K_1,\K_3)$ after exiting the conflux. Otherwise, $\K$ is in $\B(\K_1,\K_2)$. In this case, Lemma \ref{land0} guarantees the capture of a main shadow of the robber.
\end{proof}

\begin{lemma}\label{land3}
Let $\K_1, \K_2$ be such that $\dd(\K_1,\K_2) = \frac{n}{k}-2$. If the robber is not in $\B(\K_1,\K_2)$, then we can capture his main shadow.
\end{lemma}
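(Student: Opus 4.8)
The plan is to exploit the fact that with $\dd(\K_1,\K_2)=\frac{n}{k}-2$ the region $\B(\K_1,\K_2)$ is as large as it can possibly be: it omits only a single main diagonal of $\DG$. I will show the robber is trapped on that one diagonal and, being forced to move, must step into $\B(\K_1,\K_2)$, at which point Lemma \ref{land0} applies and captures a main shadow.

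First I would record a parity fact. By Observation \ref{evendd}, $\dd(\K_1,\K_2)=\frac{n}{k}-2$ is even, hence $\frac{n}{k}$ is even and $\frac{n}{k}-1$ is odd. Fix $d$ orthogonal to $\tau(\K_1)$ so that $\K_2\in\MD_{\frac{n}{k}-2}(\K_1,d)$. Recalling that $\DG\cong C_{n/k}\square C_{n/k}$, the sets $\MD_0(\K_1,d),\MD_1(\K_1,d),\dots$ are successive unit translates of $\MD(\K_1)$ in the axis direction $\frac{1}{2}(\tau(\K_1)+d)$; as $s$ runs through $0,1,\dots,\frac{n}{k}-1$ they sweep out each of the $\frac{n}{k}$ main diagonals parallel to $\MD(\K_1)$ exactly once, and $\MD_{n/k}(\K_1,d)=\MD_0(\K_1,d)$. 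Since $\B(\K_1,\K_2)$ is the union of the first $\frac{n}{k}-1$ of these, the unique main diagonal outside $\B(\K_1,\K_2)$ is $\MD_{\frac{n}{k}-1}(\K_1,d)$. Because each unit translate toggles the type of the diagonal's confluxes between parallel and orthogonal to $\tau(\K_1)$, and $\frac{n}{k}-1$ is odd, every conflux on $\MD_{\frac{n}{k}-1}(\K_1,d)$ has type orthogonal to $\tau(\K_1)$.

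Now, \qte{the robber is not in $\B(\K_1,\K_2)$} means exactly that the robber's conflux lies on $\MD_{\frac{n}{k}-1}(\K_1,d)$, so the robber's type is orthogonal to $\tau(\K_1)$. The two main diagonals cyclically adjacent to $\MD_{\frac{n}{k}-1}(\K_1,d)$ are $\MD_{\frac{n}{k}-2}(\K_1,d)$ and $\MD_0(\K_1,d)$, both contained in $\B(\K_1,\K_2)$, and every arc of $\DG$ leaving a conflux on $\MD_{\frac{n}{k}-1}(\K_1,d)$ changes the diagonal index by $\pm1$ and hence lands on one of these two. As distinct confluxes on a common main diagonal are never joined by an arc of $\DG$ (its arcs are axis-directed, whereas the diagonal runs orthogonally to $\tau(\K_1)$), the robber can keep his conflux on $\MD_{\frac{n}{k}-1}(\K_1,d)$ only by remaining inside a single conflux; since the subdigraph induced by a conflux is acyclic, a robber forced to move must eventually traverse an arc out of it and thereby enter $\B(\K_1,\K_2)$. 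At that moment the hypotheses of Lemma \ref{land0} hold --- the robber had type orthogonal to $\tau(\K_1)$, was outside $\B(\K_1,\K_2)$, and has just entered it with the main corners of $\K_1$ and $\K_2$ occupied --- so the cops capture a main shadow of the robber.

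The main obstacle is the combinatorial bookkeeping of the second paragraph: confirming that $\B(\K_1,\K_2)$ omits \emph{exactly} the one diagonal $\MD_{\frac{n}{k}-1}(\K_1,d)$, that this diagonal carries only confluxes whose type is orthogonal to $\tau(\K_1)$, and that no legal move keeps the robber out of $\B(\K_1,\K_2)$. Once these facts are in place, the conclusion is an immediate appeal to Lemma \ref{land0}.
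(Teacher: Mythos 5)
Your proof is correct and follows essentially the same route as the paper's (one-sentence) argument: the only confluxes outside $\B(\K_1,\K_2)$ have type orthogonal to $\tau(\K_1)$, so a forced move pushes the robber into a conflux of parallel type, all of which lie in $\MD(\K_1)\cup\MD(\K_2)\subseteq\B(\K_1,\K_2)$, and Lemma \ref{land0} then captures a main shadow. Your version merely makes explicit the diagonal-counting and parity bookkeeping that the paper leaves implicit.
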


\begin{proof}
This follows directly from the fact that by forcing the robber to move, he must enter a conflux whose type is parallel to $\tau(\K_1)$, all of which are contained in $\B(\K_1,\K_2)$.
\end{proof}

All the previous results in this section either assume that we already captured a diagonal shadow of the robber or include some assumption about the current position of the robber in their statements. The following is the first result that makes no such assumptions.

\begin{lemma}\label{six}
Seven cops can capture a diagonal shadow of the robber in $G$.
\end{lemma}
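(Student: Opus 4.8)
The plan is to confine the robber to a diagonal band bounded by two parallel mirrors, and then to enlarge that band until the robber is squeezed onto a diagonal through one of the guarded confluxes, at which point his main shadow is caught by the corner cops exactly as in the mechanism of Lemma \ref{land0}. Two cops are kept on the main corners of a fixed conflux $\K_1$ throughout, and a further cop always chases the robber so that he is forced to move; the remaining four cops are used to maintain (and advance) a second conflux of type $-\tau(\K_1)$ whose main corners are covered. At any steady moment only five cops sit on corners or chase, matching the budget of Lemmas \ref{land1}--\ref{land3}, and the two spare cops are used only to advance the outer boundary of the band.

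I would organise this as an induction on the diagonal distance of the band, assembled from the three results of the section. For the base case, pick $\K_1,\K_2$ with $\tau(\K_1)=-\tau(\K_2)$ and $\dd(\K_1,\K_2)=2$ and cover their main corners; since the cops move first, they can do this before the robber appears. If the robber starts inside $\B(\K_1,\K_2)$, Lemma \ref{land1} uses the fifth cop to force him out of the band. Once he is outside, Lemma \ref{land0} turns the band into a one-way wall: any attempt to re-enter it costs the robber his main shadow. I then grow the band one diagonal step at a time, replacing the outer conflux $\K_2$ by a conflux $\K_3$ with $\tau(\K_3)=\tau(\K_1)$ and $\K_3=\K_2+\tau(\K_1)+r'd$ as in Lemma \ref{land2}; that lemma shows that if the robber occupies the newly added slab $\B(\K_1,\K_3)\setminus\B(\K_1,\K_2)$, then he is either forced out of the enlarged band or his shadow is caught. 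Because the moves are monotone and the wall prevents any return, the robber is pressed into an ever-thinner complementary band, and after finitely many steps the band reaches diagonal distance $\frac{n}{k}-2$, which by Observation \ref{evendd} is the largest value allowed. At that moment the robber lies outside $\B(\K_1,\K_2)$ with $\dd(\K_1,\K_2)=\frac{n}{k}-2$, so Lemma \ref{land3} captures his main shadow and we are done.

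The only place where more than five cops are needed is the advancement of the outer boundary, and this is what the two spare cops are for. While the outer conflux is $\K_2$, guarded by two cops, the two spares walk to the main corners of $\K_3$; only after they arrive do we regard the band as $\B(\K_1,\K_3)$ and release the cops on $\K_2$, which then leapfrog ahead to the corners of the next outer conflux for the following step. Thus at most six cops ever occupy corners and one chases, so seven suffice, and the wall provided by Lemma \ref{land0} is never down for a single turn. The type bookkeeping --- whether the robber currently sits on a conflux of type $\pm\tau(\K_1)$ or of a type orthogonal to it, and correspondingly whether the relevant shadow is main or secondary --- is handled exactly as in Lemma \ref{closeshadow} and in the proofs of Lemmas \ref{land0}--\ref{land2}, which already cover both parities.

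The main obstacle is the hand-off during boundary advancement: I must advance the outer mirror while the robber is pressed against it, so that the slab governed by one application of Lemma \ref{land2} is passed cleanly to the next, never giving the robber a turn in which the boundary is unguarded or in which he can slip between the two diagonals bounding the shrinking complement. Making the leapfrog timing consistent with the robber's forced moves, and verifying that after each relabeling the hypotheses of Lemma \ref{land2} still hold --- in particular that $\K_1$ and the new outer conflux still \qte{point towards each other}, so that the added slab is nonempty and consists of confluxes of type $\pm d$ --- is the delicate part; everything else is a direct composition of Lemmas \ref{land0}, \ref{land1}, \ref{land2} and \ref{land3}.
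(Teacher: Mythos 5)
Your proposal follows essentially the same route as the paper's proof: one chasing cop, four cops holding the main corners of $\K_1$ and $\K_2$ with $\dd(\K_1,\K_2)=2$, Lemma \ref{land1} to expel the robber from the band, Lemma \ref{land0} as the one-way wall, Lemma \ref{land2} with two additional cops to advance the outer conflux and leapfrog, and Lemma \ref{land3} to finish when the band reaches diagonal distance $\frac{n}{k}-2$. The argument and the cop count are the same; your extra attention to the hand-off timing is a refinement of details the paper leaves implicit.
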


\begin{proof}

First, one of the cops will be chasing the robber in order to force him to move, so we only need to show that six cops can capture the robber if he is forced to move. 
Let $\K_1$ and $\K_2$ be confluxes such that $\tau(\K_2)=-\tau(\K_1)$ and $\dd(\K_1,\K_2)=2$. We move 4 cops to their main corners. By Lemma \ref{land1} we can make sure that the robber is in $\B(\K_1,\K_2)^c$ (i.e. out of $\B(\K_1,\K_2)$). By applying Lemma \ref{land0} we can see that, once the robber is in $\B(\K_1,\K_2)^c$, he cannot enter $\B(\K_1,\K_2)$ without having his shadow captured. We will show that we can either capture a diagonal shadow of the robber or force the conditions of Lemma \ref{land3}. 

Let $\K_3$ be a conflux such that $\dd(\K_1,\K_3) = \dd(\K_1,\K_2) + 2$ and cover its main corners with two cops. Notice that Lemma \ref{land2} guarantees that either we capture a diagonal shadow of the robber (in which case we are done) or that the robber is in $\B(\K_1,\K_3)^c$. In the latter case, the cops in $\K_2$ can be released and we can rename $\K_3$ as $\K_2$. This can be done until the diagonal shadow of the robber is caught or $\dd(\K_1,\K_2) = \frac{n}{k}-2$. In the later situation, since the robber is $\B(\K_1,\K_2)^c$, by applying Lemma \ref{land3} we can capture the robber's diagonal shadow.
%Let $T = \brac{\K \in V(\DG)\st \SD(\K) \subseteq \left[ V(\DG) - \B(\K_1,\K_2) \right]}$ and $t = \card{T}$.
\end{proof}

The basic idea of the proof of Theorem \ref{k-regular} is to successively capture diagonal shadows of the robber such that their mirrors get closer until the distance between them is two, and then use the remaining cops to capture the robber between those mirrors. However, it is clear that in order to effectively restrict the robber's movements we need two different mirrors. However, there is no way to guarantee that if we have a cop in a diagonal shadow of the robber and we use Lemma \ref{six} again we won't capture the shadow where we already have a cop. A simple way around this problem is to use Lemma \ref{six} twice at the same time. This is what the following result deals with.

\begin{lemma}\label{twelve}

Thirteen cops can capture two main diagonal shadows of the robber simultaneously. Moreover, we can actually guarantee that the distance between the mirrors of the diagonal shadows is two.
\end{lemma}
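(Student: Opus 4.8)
The plan is to run the strategy of Lemma \ref{six} twice in parallel, with a single cop whose only job is to chase the robber and force him to move. This accounts for $1+6+6=13$ cops. Call the two teams of six cops $T$ and $T'$. Team $T$ executes the sweep of Lemma \ref{six} anchored at a fixed conflux $\K_1$, and team $T'$ executes the same sweep anchored at a fixed conflux $\K_1'$ chosen with $\tau(\K_1')=\tau(\K_1)$ but displaced from $\K_1$ along the main diagonal by the smallest amount producing a distinct parallel mirror. The reason for using a \emph{shared} chaser is that both sweeps react to the very same sequence of robber moves, so the two nets can be made to advance in lockstep.

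First I would argue that the two bands can be nested. Since $\tau(\K_1)=\tau(\K_1')$, the two anchors have the same type, so the main shadows that $T$ and $T'$ aim to capture lie on the same main diagonal of the robber and their mirrors are parallel. I would then set up the sweeping confluxes $\K_2,\K_2'$ of type $-\tau(\K_1)$ with $\dd(\K_1,\K_2)=\dd(\K_1',\K_2')=2$ so that $\B(\K_1',\K_2')$ contains $\B(\K_1,\K_2)$ offset by the chosen displacement. A single coherent herding—exactly the forcing move prescribed by Lemma \ref{land1} and Lemma \ref{land2}—then pushes the robber out of both bands at once, because leaving the larger band automatically respects the smaller one. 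Next I would run the growth phase of Lemma \ref{six} on both nets simultaneously: introduce $\K_3$ (resp.\ $\K_3'$) at diagonal distance two beyond $\K_2$ (resp.\ $\K_2'$), apply Lemma \ref{land2} to each net, and either capture a main shadow for that team or release the trailing conflux and relabel, as in Lemma \ref{six}. Because the robber moves once per turn and both nets read that single move, the invariants \qte{robber outside $\B(\K_1,\K_2)$} and \qte{robber outside $\B(\K_1',\K_2')$} are maintained together, and by Observation \ref{evendd} the diagonal distances are even, so the two sweeps step by two and terminate in step. When both reach $\tfrac{n}{k}-2$, Lemma \ref{land3} forces the capture of a main shadow for each of $T$ and $T'$, after which Lemma \ref{closeshadow} keeps both cops tracking their main shadows indefinitely.

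For the \qte{moreover} clause I would verify that the two mirrors $\ell,\ell'$ are at distance exactly two. They are parallel by construction and distinct because $\K_1,\K_1'$ were displaced, and since $\A(\ell,\ell')\ge 2$ for any two distinct parallel mirrors, it suffices to check that the minimal admissible anchor displacement yields $\A(\ell,\ell')=2$ rather than more. If a direct choice of $\K_1'$ does not already achieve this, I would instead freeze the first mirror and re-run the second sweep with the freed cops, ratcheting $\ell'$ one admissible step at a time toward $\ell$ until the distance drops to two; the parity built into $\A$ and into $\dd$ prevents overshoot.

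The hard part will be the simultaneity: guaranteeing that herding the robber on behalf of one net never violates the invariant of the other. The resolution is the nesting of the two bands, so that one herding move serves both; but one must still confirm that the relabeling steps (releasing $\K_2$ and renaming $\K_3$) can be synchronized even when the two bands close at different rates, and this is exactly where the evenness of the diagonal distances from Observation \ref{evendd} is needed. A secondary subtlety is that the robber's type flips as he moves, toggling whether each tracking cop should sit on the main or the secondary diagonal; since both nets watch the same robber this toggle is shared, and Lemma \ref{closeshadow} carries both cops through each flip, so both finish on \emph{main} shadows as the statement requires.
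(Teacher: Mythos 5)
Your overall architecture is the same as the paper's (use twelve cops to catch two shadows with parallel mirrors, then repeatedly insert a new mirror between them and release an outer one), but both of the places you yourself flag as delicate are genuine gaps, and the paper closes them with specific constructions that your sketch does not supply. First, the simultaneity. Running two \emph{independent} executions of Lemma \ref{six} with nested bands does not stay in lockstep just because the robber's move sequence is shared: the release-and-relabel step of Lemma \ref{six} is triggered by the robber exiting $\B(\K_1,\K_3)$, and if $\B(\K_1',\K_3')$ is a strictly larger (or offset) band, the two exit events occur at different times, so the two teams desynchronize and you have no guarantee that the two captures happen on the same move --- which is exactly what ``simultaneously'' requires. (Your phrase that $\B(\K_1',\K_2')$ ``contains $\B(\K_1,\K_2)$ offset by the displacement'' is also internally inconsistent: a band cannot both contain another and be its translate.) The paper avoids running a second sweep at all: each cop $C$ at position $v$ is paired with a copy $C'$ placed at $v' \in (\SD(v)\cap\MD(v))\setminus\{v\}$, and $C'$ simply repeats $C$'s moves so as to remain in $C$'s diagonal shadow (Lemma \ref{closeshadow}). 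Since $\tau(v')=\tau(v)$ and the $k$-regular grid is invariant under this fixed diagonal translation, the entire six-cop configuration is duplicated rigidly; the instant team one traps a main shadow of the robber, the translated team traps a second one, on the same move, with a parallel and distinct mirror. There is only one strategy being executed, so nothing needs to be synchronized.

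Second, the ``moreover'' clause. Your ratcheting step asserts that re-running the sweep moves $\ell'$ ``one admissible step at a time toward $\ell$,'' but Lemma \ref{six} as stated gives you \emph{some} captured shadow with no control over where its mirror lands; parity of $\dd$ prevents landing between two mirrors at distance two, but it does not prevent the new mirror from landing outside the strip bounded by $\ell$ and $\ell'$, in which case the distance does not decrease. The paper controls this by observing that whenever $\A(\ell,\ell')\geq 4$ there exist three maximal confluxes with mutually disjoint main diagonals all lying strictly between $\ell$ and $\ell'$, and by restricting the anchor confluxes of the new six-cop sweep to those diagonals; this forces the new mirror $\ell''$ to satisfy $\max\{\A(\ell,\ell''),\A(\ell',\ell'')\}<\A(\ell,\ell')$, which is the strict decrease your induction needs. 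Your cop count ($1+6+6=13$ for the first phase, and $1+2+6\le 13$ for each ratcheting step) matches the paper's, but without these two mechanisms the proof as written does not go through.
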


\begin{proof}
Like in the proof of Lemma \ref{six}, a cop will force the robber to move, so we only need to show that twelve cops can achieve the desired result if the robber is forced to move at such step. For each cop $C$ used in the strategy of Lemma \ref{six}, we will use one more cop $C'$ in the following way: If $p(C) = v$, we will choose $p(C') = v'$, where $v ' \in (\SD(v) \cap \MD(v)) \setminus \{v\}$. Notice that $\tau(v) = \tau(v')$, so  we can move $C'$ in such a way that he stays in the shadow of $C$. In this way, by using the strategy of Lemma \ref{six} with the first set of six cops and maintaining the copies of the cops in their shadows, we will capture two main shadows of the robber simultaneously. Let $\ell$ and $\ell'$ be the mirrors of these shadows, and $C$ and $C'$ the cops moving in these shadows. 

If $\A(\ell,\ell') \leq 2$ we are done, so we can assume it is greater than two. Suppose the type of the confluxes that we used in the application of Lemma \ref{six} is $d$. Notice that whenever $\A(\ell,\ell') \geq 4$, there exist maximal confluxes $\K_1, \K_2$ and $\K_3$ whose types are parallel to $\ell$ such that $\MD(\K_1), \MD(\K_2)$ and $\MD(\K_3)$ are mutually disjoint. 

Since we are using one cop to guard each mirror, we have ten free cops. By using six of those ten cops to repeat the strategy moving positioning the cops in confluxes in $\MD(\K_1)\cup\MD(\K_2)\cup\MD(\K_3)$ of type $d$ we will capture a new diagonal shadow. If $\ell''$ is the mirror corresponding to this new shadow and $C''$ is the cop guarding it, notice that $\max\{\A(\ell,\ell''), \A(\ell',\ell'') \} < \A(\ell,\ell')$ and the robber is either between $\ell''$ and $\ell$ or between $\ell''$ and $\ell'$, so we can release either $C$ or $C'$. Since the robber's movements are restricted to a strictly smaller set, induction over the distance between the mirrors gives us that the distance between mirrors is two. 
\end{proof}

With this we are ready to prove the main theorem of this section:

\begin{thm}
\label{k-regular}
For every $k \geq 2$, if $G$ is a $k$-regularly oriented grid, then $c(G) \leq 13$.
\end{thm}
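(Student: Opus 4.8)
The plan is to treat Lemma~\ref{twelve} as the engine of the proof and reduce the theorem to a short endgame. First I would invoke Lemma~\ref{twelve}: with all $13$ cops we capture two main diagonal shadows of the robber and, crucially, arrange that the two corresponding mirrors $\ell$ and $\ell'$ satisfy $\A(\ell,\ell') = 2$, the smallest possible value for two distinct parallel mirrors. Two cops, say $C$ and $C'$, are then dedicated to keeping these shadows alive: by Lemma~\ref{closeshadow} each can respond to every robber move so as to remain a diagonal shadow, and by the geometric interpretation of Lemma~\ref{bisector} the robber can never step onto (hence never cross) either mirror without being captured. Thus after this phase the robber is permanently confined to the diagonal band of width $2k$ bounded by $\ell$ and $\ell'$, while one further cop chases him to force a move on every turn. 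This already leaves a comfortable surplus of free cops, which reflects that Lemma~\ref{twelve}, not the endgame, is the binding constraint on the bound $13$.

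The endgame is then to capture a robber trapped in this narrow band. I would select a maximal conflux $\K$ lying in the band whose two streams cut all the way across it, and set the trap of Lemma~\ref{trap2} on $\K$, placing cops on the main corners (and, if it exists, on the secondary corner of $\K$ with no out-neighbour in $\K$). Since the robber is confined to the band and is forced to move each turn, his type must keep alternating, so he cannot circulate forever and must eventually enter $\K$; once he does, Lemma~\ref{trap2} restricts his movement to a single stream. As observed immediately after Lemma~\ref{trap1}, one additional cop then captures him inside that stream. Counting the cops in use—two for the mirrors, one chaser, at most three for the conflux trap, and one for the final capture—we stay well within $13$.

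The hard part will be making this confinement-to-capture step fully rigorous on the torus. Two points need care. First, I must check that the two shadow mirrors can be held at distance exactly two while the trap is assembled and sprung, i.e.\ that the bookkeeping of Lemma~\ref{bisector} (each mirror drifts by at most one unit of the orthogonal type $d$ per move) never lets the band widen or the robber slip past a mirror. Second, I must confirm that a band of width $2k$, which wraps around the torus, genuinely contains a conflux whose streams span it and through which the robber must pass, rather than one in which he can oscillate forever in the gap between consecutive confluxes. I would settle this second point by passing to the conflux digraph $\DG \cong C_k \square C_\ell$ and arguing there: the band corresponds to a bounded set of confluxes of both types, forcing the robber to move makes $\tau(p(R))$ alternate, and so the robber is driven through the confluxes of the band, one of which carries the trap. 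Verifying these two technical claims is where essentially all the remaining work lies.
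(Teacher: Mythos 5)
Your first phase (invoking Lemma~\ref{twelve} to pin the robber between two mirrors at distance two, with one cop forcing him to move) matches the paper. The endgame, however, has a genuine gap. You place a static trap (Lemma~\ref{trap2}) on one fixed conflux $\K$ in the band and assert that, because the robber's type keeps alternating, ``he cannot circulate forever and must eventually enter $\K$.'' Type alternation does not give this. In a $k$-regularly oriented grid consecutive maximal streams alternate direction, so the conflux digraph $\DG\cong C_{n/k}\,\square\,C_{n/k}$ is itself an alternating straight-ahead orientation, and half of its faces are \emph{directed} $4$-cycles of confluxes. The robber can circulate around such a $4$-cycle --- a $2k\times 2k$ block of vertices of $G$ --- indefinitely while being forced to move at every step, and his type alternates exactly as required throughout. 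So he is never compelled to visit a prescribed conflux $\K$ elsewhere in the band; the oscillation worry you flag at the end is real, and passing to $\DG$ only makes the obstruction visible rather than resolving it. (There is also a secondary issue: the band between mirrors at distance two is wide enough, by Observation~\ref{evendd} and the definition of $\A$, that you cannot rule such a $4$-cycle out of it on dimensional grounds alone.)

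The paper's proof avoids this by an \emph{active} endgame rather than a static trap: it keeps working with shadows inside the band, guarding the main corners of four confluxes $\K_1,\K_2,\K_3,\K_4$ with $\dd(\K_1,\K_2)=2$ and $\K_3=\K_1+2d$, $\K_4=\K_2+2d$, uses Lemmas~\ref{land1} and~\ref{land3} to force the robber into $\B(\K_1,\K_4)^c$, and then advances the guarded confluxes by $4d$, shrinking the robber's territory by induction on $t=\frac{n}{k}-\frac12\dd(\K_1,\K_4)$ until Lemma~\ref{land3} finishes him off. The point is that a main diagonal shadow captured between the two mirrors, of type parallel to them, \emph{is} the robber, so the sweeping machinery of Lemmas~\ref{land0}--\ref{land3} can be reused verbatim in the endgame. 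To repair your argument you would need either to adopt such a sweep, or to prove separately that the cops can force the robber out of every directed $4$-cycle of confluxes in the band --- which is essentially the same work.
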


\begin{proof}
Since we have $13$ cops, we can use one to chase the robber and force him to move. That means we have twelve free cops. By Lemma \ref{twelve}, we can assume 10 of those cops are free and that the robber is restricted to the vertices between two mirrors at distance two. If we manage to capture a main diagonal shadow of the robber between the mirrors whose type is parallel to the mirrors, then we capture the robber.

Let $\K_1, \K_2, \K_3$ and $\K_4$ be confluxes such that $\dd(\K_1,\K_2) = 2$,  $d = \tau(\K_1) - \tau(\K_2)$, and $\K_3 = \K_1 + 2d$ and $\K_4 = \K_2 + 2d$ and guard the main diagonals of each of these four confluxes with two cops. We can assume the robber is in a vertex of type orthogonal to $\tau(\K_1)$. Notice that $\dd(\K_2,\K_3) = \frac{n}{k}-2$. An application of Lemma \ref{land3} to $\K_2$ and $\K_3$ guarantees that the robber is in $\B(\K_2,\K_3)$. By Lemma \ref{land1} applied to $\K_1$ and $\K_2$, and to $\K_3$ and $\K_4$, we can guarantee that the robber is in $\left(\B(\K_1,\K_2) \cup \B(\K_2,\K_3)^c \cup\B(\K_3,\K_4)\right)^c = \B(\K_1,\K_4)^c$. 

Let $t = \frac{n}{k} - \frac{1}{2}\dd(\K_1,\K_4)$. The proof will be by induction on $t$. If $t = 1$, Lemma \ref{land3} guarantees the capture of the robber, so we can assume $t \geq 2$. Suppose $t = m$. Since the robber is in $\B(\K_1,\K_4)^c$, we can release the cops in $\K_2$ and $\K_3$ and move them to the main corners of the confluxes $\K_5 = \K_1 + 4d$ and $\K_6 = \K_4 + 4d$. Again, an application of Lemma \ref{land1} with $\K_5$ and $\K_6$ guarantees that the robber is in $\B(\K_5,\K_6)^c$, and using Lemma \ref{land3} with $\K_4$ and $\K_5$ gives that the robber is in $\B(\K_4,\K_5)$. This now gives us that the robber is in $B(\K_1,K_6)^c$, so if we rename $\K_6$ as $\K_4$ we get that $t = m - 1$, so the result follows by induction.
\end{proof}

It is important to mention that the only part of the proof where we use $13$ cops is during the application of Lemma \ref{twelve}. The rest of the proof only uses $11$ cops, so finding a more efficient way of capturing two diagonal shadows simultaneously would improve the bound for the cop number of $G$.

\section{Paddles}\index{paddle}

Now we return to the general case of straight-ahead oriented toridal grid $C_n\,\square\, C_n$.
We begin by establishing more general conditions which guarantee that the robber is \emph{confined to a stream}, i.e., the robber will be forced to stay in the subgraph of the stream as he will be caught if trying to leave it.
As before, we assume that the robber is forced to move.

\begin{lemma} \label{stream trap} Let $S$ be a stream and let $\ell_1$ and $\ell_2$ be the lines which form the boundary of $S$.
Suppose that the robber is in $S$, at distance $d_1$ from $\ell_1$ and at distance $d_2$ from $\ell_2$.
Let $v_1, v_2$ be the closest vertices (using distances in the undirected grid) of $\ell_1, \ell_2$ to $p(R)$, respectively.
Suppose further that there are distinct cops $C_1$ and $C_2$, such that $C_1$ can move to $v_1$ in $m_1 \leq d_1$ moves and $C_2$ can move to $v_2$ in $m_2 \leq d_2$ moves.
Then by using $C_1$ and $C_2$ we can ensure that the robber will be caught or confined to $S$.
\end{lemma}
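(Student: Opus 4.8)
The plan is to guard the two boundary lines independently and symmetrically, showing that $C_1$ prevents the robber from crossing $\ell_1$ while $C_2$ simultaneously prevents a crossing of $\ell_2$, so the robber is either caught or trapped between them inside $S$. By the symmetry of the grid I may assume $S$ is a block of consecutive columns all oriented upward, with $\ell_1$ the left boundary column (say column $a$) and $\ell_2$ the right boundary; leaving $S$ across $\ell_1$ then means standing on column $a$ and taking a leftward step. Two structural facts drive everything. First, while the robber stays in $S$ and is forced to move, his height (the coordinate along the columns) is non-decreasing, since his only moves are up, left, and right. Second, $\ell_1$ is itself an upward-oriented directed cycle, so a cop standing on it can raise his height by one each round. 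I will treat $C_1$ versus $\ell_1$ in detail; the argument for $C_2$ and $\ell_2$ is the mirror image, and since each cop reacts only to the robber's (common) trajectory the two strategies do not interfere.

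First I would send $C_1$ along a directed shortest path to $v_1$, the vertex of $\ell_1$ in the robber's current row, arriving in $m_1 \le d_1$ moves; call this the \emph{setup phase}. During setup the robber cannot cross $\ell_1$: reaching column $a$ costs him at least $d_1$ moves, since each move decreases his horizontal distance to $\ell_1$ by at most one, and crossing costs one more, so no escape is possible in the first $m_1 \le d_1$ rounds. The delicate point is the robber's upward drift: by the time $C_1$ reaches $v_1$ the robber has risen above the row of $v_1$. I would control this by bookkeeping. If during setup the robber makes $u$ up-moves, $\lambda$ left-moves, and $\rho$ right-moves, so $u+\lambda+\rho=m_1$, then when $C_1$ lands on $\ell_1$ the vertical gap $g$ (robber's height minus cop's height) equals $u$ and the horizontal gap $h$ equals $d_1-\lambda+\rho$. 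Since $u+\lambda-\rho = m_1-2\rho \le m_1 \le d_1$, this yields the key inequality $0 \le g \le h$.

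From that moment $C_1$ enters the \emph{shadowing phase}, tracking the robber's projection onto $\ell_1$: while $g\ge 1$ it moves up one step, and once $g=0$ it stays at the vertex of $\ell_1$ in the robber's row. The invariant $0\le g\le h$ is preserved, because a robber up-move keeps $g$ fixed while $C_1$ rises with him, and a robber horizontal move leaves the cop's target fixed so that $C_1$'s upward step lowers $g$ by one. Crucially $C_1$ never rises above the robber, so the relevant directed distance along $\ell_1$ is always exactly $g$. The robber can reach column $a$ only after driving $h$ to $0$, and by then $g$ has already reached $0$, so $C_1$ sits precisely on the robber's projection; any step of the robber onto column $a$ lands on $C_1$. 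Hence the robber is captured or permanently barred from crossing $\ell_1$, and running $C_2$ against $\ell_2$ bars the other side, so the robber is caught or confined to $S$.

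I expect the main obstacle to be the clean handoff between the setup phase and the shadowing phase: proving that the timing hypothesis $m_1\le d_1$ forces $g\le h$ the instant $C_1$ arrives on $\ell_1$, and guaranteeing that the cop is never stranded above the robber's height, which would make catching up expensive because $\ell_1$ is a directed cycle rather than an undirected one. Once the inequality $0\le g\le h$ is established and shown invariant, capture or confinement is immediate; it is the orientation of $\ell_1$ together with the monotonicity of the robber's height that make this shadow maintainable with a single cop per side.
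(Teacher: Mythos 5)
Your proof is correct and follows essentially the same strategy as the paper's: each cop chases the robber's projection onto its boundary line while maintaining the invariant that the cop's directed distance to that projection never exceeds the robber's distance to the line, so any step onto the boundary lands on a cop. Your explicit two-phase bookkeeping with $0 \le g \le h$ is just a more concrete rendering of the paper's uniform invariant $m_i \le d_i$, updated after each round.
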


\proof If $d_1 = 0$ or $d_2 = 0$ then the robber is already caught.
Otherwise, whatever the robber's move, we update $v_1$, $v_2$, $d_1$ and $d_2$ accordingly.
Now for each $i \in \{1, 2\}$, if $C_i$ is at $v_i$ then he remains in place; otherwise, he moves towards $v_i$.
We will show that the conditions of the lemma are maintained.
If the robber moved in the direction of the stream, then $v_1$ and $v_2$ each move in the direction of the stream and $d_1$ and $d_2$ are unchanged.
In this case the robber's move increases $m_1$ and $m_2$ by at most $1$, and the cops' moves immediately decrease $m_1$ and $m_2$ by $1$, to a minimum of $0$; thus $m_1 \leq d_1$ and $m_2 \leq d_2$.
If the robber moved towards $\ell_1$, then $d_1$ decreases by $1$ and $d_2$ increases by $1$.
The cops' moves now decrease $m_1$ and $m_2$ by $1$, to a minimum of 0, and again $m_1 \leq d_1$ and $m_2 \leq d_2$.
The case in which the robber moved towards $\ell_2$ is similar.
\endproof

Given a conflux $\K$, we refer to the secondary corner with no outneighbours in $\K$ as the \emph{terminal corner} of $\K$.
Let $v$ be the main corner of $\K$ with a vertical (respectively, horizontal) edge leaving $\K$ (but no other edge leaving $\K$, unless $\K$ has only one vertex);
then we refer to the vertical (respectively, horizontal) outneighbour of $v$ as the \emph{vertical guard post} (respectively, \emph{horizontal guard post}) of $\K$.
If $\K$ is maximal, then we refer to the vertex outside $\K$ with the same outneighbours as the terminal corner as the \emph{terminal guard post} of $\K$. See Figure \ref{fig:new3}.

\begin{figure}[htb]
\begin{center}
\includegraphics[totalheight=0.2\textheight]{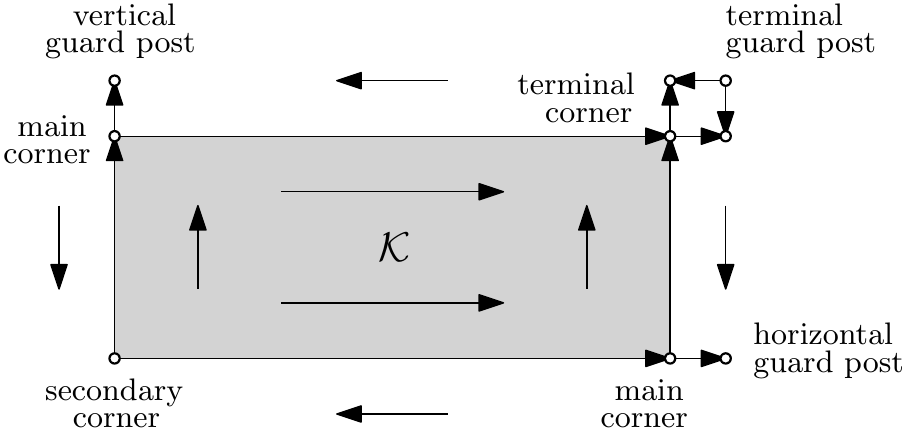}
\caption{A maximal conflux $\K$, its corners and its guard posts.} \label{fig:new3}
\end{center}
\end{figure}

\begin{lemma} \label{trap3} Let $S_1$ be a vertical stream, $S_2$ be a horizontal stream and let $\K$ be the conflux $S_1 \cap S_2$.
Suppose that the robber is in $\K$, for each $i \in \{1, 2\}$, let $d_i$ and $d'_i$ be the distance from $p(R)$ to the boundary of $\K$ in the direction of $S_i$ and in the opposite direction, respectively.
Suppose that there are distinct cops $C_V$, $C_H$ and $C_T$ such that $C_V$ can reach the vertical guard post of $\K$ in $m_V \leq d_1 + d'_2 + 1$ moves, $C_H$ can reach the horizontal guard post of $\K$ in at most $m_H \leq d_2 + d'_1 + 1$ moves, and $C_T$ can reach either the terminal corner or, if $\K$ is maximal, the terminal guard post of $\K$ in $m_T \leq d_1 + d_2$ moves.
Then by committing $C_V$, $C_H$ and $C_T$ we can ensure that the robber will be caught or confined to either $S_1$ or $S_2$.
\end{lemma}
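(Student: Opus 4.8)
The plan is to normalize coordinates so that the common type $\tau(\K)$ sends both out-edges of every vertex of $\K$ rightward and downward, and to write $\K=[x_1,x_2]\times[y_1,y_2]$, with $S_1$ the vertical stream on columns $x_1,\dots,x_2$ and $S_2$ the horizontal stream on rows $y_1,\dots,y_2$. Writing $p(R)=(a,b)$, so that $d_1=b-y_1$, $d_1'=y_2-b$, $d_2=x_2-a$ and $d_2'=a-x_1$, the forced robber can only move right or down while in $\K$, so the vertices he can reach without leaving $\K$ form the subrectangle $[a,x_2]\times[y_1,b]$. Hence he can leave $\K$ only by a down-step across the bottom row $y_1$, entering $S_1$ below $\K$; by a right-step across the right column $x_2$, entering $S_2$ to the right of $\K$; or by first reaching the terminal corner $(x_2,y_1)$ and cutting across into the quadrant of vertices lying both right of $S_1$ and below $S_2$, which belongs to neither stream. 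The strategy assigns the three cops to rule out the corner-cut and to confine the robber to $S_1$ or $S_2$ according to which side he first crosses.

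The geometric key is that the terminal corner $(x_2,y_1)$ is the intersection of the two ``downstream'' boundary lines that the robber actually drifts toward: the right column $x_2$, a boundary line of $S_1$, and the bottom row $y_1$, a boundary line of $S_2$. I would therefore use $C_T$ as a single cop poised on \emph{both} of these lines at once, and reserve $C_V$ and $C_H$ for the two ``far'' boundaries, namely the left column $x_1$ of $S_1$ and the top row $y_2$ of $S_2$. First I would route the cops to their guard posts. The robber reaches the terminal corner in exactly $d_1+d_2$ moves, so $m_T\le d_1+d_2$ lets $C_T$ occupy the terminal corner (or the terminal guard post when $\K$ is maximal) before the robber can begin any corner-cut. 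Likewise $d_1+d_2'+1$ is exactly the number of down-and-left steps from $p(R)$ to the vertical guard post $(x_1,y_1-1)$, and $d_2+d_1'+1$ the number of right-and-up steps to the horizontal guard post $(x_2+1,y_2)$, so the hypotheses $m_V\le d_1+d_2'+1$ and $m_H\le d_2+d_1'+1$ place $C_V$ on the left boundary column and $C_H$ on the top boundary row in time.

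With the cops in position I would invoke Lemma \ref{stream trap} for whichever side the robber crosses. If he leaves through the bottom he is in $S_1$, and the pair $\{C_V,C_T\}$ confines him there, $C_V$ tracking the left column $x_1$ and $C_T$ tracking the right column $x_2$; if he leaves through the right he is in $S_2$, and $\{C_H,C_T\}$ confines him there, $C_H$ tracking the top row $y_2$ and $C_T$ tracking the bottom row $y_1$. Because $C_T$ sits on both downstream boundary lines before the robber arrives, it can start tracking the correct one as soon as the robber's first exit reveals which stream he has entered; and since a single diagonal corner-cut requires two moves, the relevant stream trap is already active to forbid the second one, so the neither-stream quadrant is never reached. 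In each case I must verify that the guard-post positions translate into the ``$m_i\le d_i$'' hypothesis of Lemma \ref{stream trap} at the instant the robber commits, and that $C_T$, having spent up to $d_1+d_2$ moves reaching the terminal corner, still meets its tracking budget for the one boundary line that then becomes relevant.

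The main obstacle is exactly this double duty of $C_T$: I must check that one cop can both be poised at the terminal corner and then serve as one of the two boundary-tracking cops of Lemma \ref{stream trap}, with every ``$m_i\le d_i$'' inequality surviving both the moves already spent and the running updates to $d_1,d_1',d_2,d_2'$ as the robber moves (including the case where he exits a side strictly before reaching the corner). A secondary subtlety is that outside $\K$ the orientation need not be the fixed right/down type, since the lines bounding a maximal stream are not oriented consistently with it; here I would lean on Lemma \ref{stream trap}, which is indifferent to the orientation outside the stream, and this is precisely what forces the split between the terminal corner (general case) and the terminal guard post (maximal case, where the relevant boundary vertex sits just outside $\K$). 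Finally I would clear the degenerate cases, such as a width-one stream, some $d_i=0$ with the robber already on a side of $\K$, or a guard post coinciding with a corner, in which the robber is either captured at once or one of the three cops is redundant.
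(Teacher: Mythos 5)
Your proposal is correct and follows essentially the same route as the paper's proof: let the forced robber drift toward the terminal corner while the three cops close in on their posts, and once he exits $\K$ into $S_1$ or $S_2$, hand off to Lemma~\ref{stream trap} with $C_T$ doing double duty on the two downstream boundary lines (right column and bottom row) and $C_V$, $C_H$ covering the far boundaries. The distance bookkeeping you flag as "to be verified" is exactly the routine check the paper performs (the budgets $d_1+d'_2+1$, $d_2+d'_1+1$, $d_1+d_2$ are maintained under every move the robber makes inside $\K$, and after one further cop step they become the $m_i\le d_i$ hypotheses of Lemma~\ref{stream trap}), so there is no gap in substance.
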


\proof If the robber does not leave $\K$ on his move, then with the cops' moves we will decrease $m_V$, $m_H$ and $m_T$ by $1$, to a minimum of zero; then it is clear that the conditions of the lemma still hold.
Since the robber can make only finitely many such moves, we may assume that the robber leaves $\K$ on his move.
Suppose without loss of generality that he leaves $S_2$ and remains in $S_1$.
In this case, before the robber's move we must have had $d_1 = 0$, and hence $m_V \leq d'_2 + 1$ and $m_T \leq d_2$.
Let $\ell_1$ and $\ell_2$ be the boundary lines of $S_1$, such that there is a directed path in $\K$ from $\ell_2$ to $\ell_1$.
Let $v_1$ and $v_2$ be the closest vertices of $\ell_1$ and $\ell_2$, respectively, to $p(R)$.
Now observe that $C_T$ can move to $v_1$ in at most $d_2 + 1$ moves (by first moving to the terminal corner or terminal guard post of $\K$), while $C_V$ can move to $v_2$ in at most $d'_2 + 1$ moves (since $v_2$ is the vertical guard post of $\K$).
We move each cop one step along the appropriate directed path;
now Lemma~\ref{stream trap} implies that we can ensure the robber will be caught or confined to $S_1$.
\endproof

Our strategy to catch the robber will be based on blocking streams of maximum width and thus confining the robber to be between two vertical or horizontal streams. The basic tool is the following. Let us denote by $w$ the largest width of a stream in $G$. To avoid technicalities in the forthcoming proofs, we will assume that $n\gg w$. The use of this assumption is justified by the covering Lemma \ref{lem:cover}.

\begin{lemma} \label{block brook} Let $S$ be a stream of maximum width, and let $m = \lfloor w(S)/3 \rfloor + 1$.
Let $S'\subseteq S$ be formed by the lines of $S$ that are at distance at least $m - 1$ from the boundary lines of $S$.
Then by committing $60$ cops, and temporarily using additional $60$ cops, we can ensure that after a finite number of steps, if the robber enters $S'$ then he will be caught or confined to a stream.
\end{lemma}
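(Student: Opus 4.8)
The plan is to reduce the statement to the local trapping lemmas by exploiting the confluxes that lie along $S$. Assume without loss of generality that $S$ is vertical, with boundary lines $\ell_1,\ell_2$, and let $\{T_j\}$ be the maximal horizontal streams crossing $S$, giving confluxes $\K_j = V(S)\cap V(T_j)$ that partition $V(S)$ into horizontal bands. At every moment the robber occupies a unique band $\K_j$, and since $S$ has \emph{maximum} width we have $w(T_j)\le w(S)$, so each band has vertical extent at most $w(S)$. The decisive geometric observation is that a robber in $S'$ sits at horizontal distance at least $m-1 = \lfloor w(S)/3\rfloor$ from each of $\ell_1,\ell_2$. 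In the notation of Lemma~\ref{trap3} applied to $\K_j$ (with $S_1=S$ and $S_2=T_j$) this says $d_2,d_2'\ge m-1$, while maximality gives $d_1,d_1'\le w(S)$; these are exactly the quantities in the move-count hypotheses $m_V\le d_1+d_2'+1$, $m_H\le d_2+d_1'+1$ and $m_T\le d_1+d_2$, so the buffer of width $m-1$ is precisely what will supply the cops enough reaction time.

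First I would bring the cops into a \emph{tracking formation}: a bounded number of constant-size teams that maintain prescribed positions relative to the robber's current band, namely on $\ell_1$ and $\ell_2$ near his vertical level, and within reach of the vertical, horizontal and terminal guard posts (or terminal corner) of the band $\K_j$ and of the bands immediately above and below it. Because a cop may traverse only directed edges, one cop cannot shadow a vertex that moves both up and down a directed cycle; to absorb the robber's vertical reversals I would keep, for each boundary line and each relevant guard post, a small pool of cops spaced so that after any robber move some cop is always positioned ``ahead'' in the directed sense and can reach the required vertex within the allotted moves. Summing the (constant) team sizes over the finitely many contingencies — which side the robber threatens, whether he moves with or against the orientation of $S$, and which of the three adjacent bands he occupies — yields the stated budget of at most $60$ committed cops, with a further $60$ used only transiently while the formation is assembled and a single trap is sprung. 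Assembling this formation takes only finitely many steps, as required.

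Once the formation is in place and the robber enters $S'$, I would spring the trap on his current band by invoking Lemma~\ref{trap3} on $\K_j$: the bounds $d_2,d_2'\ge m-1$ together with $d_1,d_1'\le w(S)$ certify the three move-count hypotheses, so the guard-post cops arrive in time and the robber is caught or confined to $S$ or to $T_j$, in either case to a stream (the confinement step ultimately rests on Lemma~\ref{stream trap}). The main obstacle I expect is exactly this directed-movement coordination: proving that after the set-up phase, and for every sequence of robber moves up and down $S$, a committed cop can always be the one to reach each needed guard post within the budget of Lemma~\ref{trap3}. Choosing the spacing of the cop pools so that the orientations of the boundary cycles and of the crossing streams never leave a guard post momentarily unreachable is the delicate point, and is what inflates the constant; once the spacing is fixed, verifying the individual inequalities from the buffer $m-1$ and the maximum-width bound is routine.
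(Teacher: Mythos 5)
You have correctly isolated the local tool (Lemma~\ref{trap3}, fed by Lemma~\ref{stream trap}) and the role of the buffer: a robber in $S'$ is at horizontal distance at least $m-1$ from each boundary line, and maximality of $S$ bounds the height of every crossing conflux by $w(S)$, which is exactly what makes the guard-post move counts work out. That part matches the paper. But there is a genuine gap at the step you yourself flag as ``the delicate point'': you assert that a constant-size pool of cops, suitably spaced along the boundary lines, can be kept positioned so that after every robber move some cop is ahead of him in the directed sense. On a directed cycle of length $n\gg w(S)$ this cannot be done statically --- spacing cops at distance $m$ along an entire line costs about $n/m$ cops, not $O(1)$ --- and it cannot be done by naive shadowing, since the boundary lines of $S$ are all oriented the same way, so cops on $\ell_1,\ell_2$ can only drift in one vertical direction while the robber may reverse at will. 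Your proposal names this obstacle but supplies no mechanism that overcomes it, and the cop count of $60$ is asserted rather than derived.

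The paper's proof is essentially the machinery needed to fill exactly this hole. It forms the cops into \emph{paddles}: $30$ cops spaced at distance $m$ along $\ell_1,\ell_2$ (inner paddle) or along the adjacent outside lines $\ell'_1,\ell'_2$ (outer paddle), covering only a bounded window of $O(m)$ rows called the domain. Because the outside lines are oriented oppositely to the boundary lines, an inner paddle can track the robber upward and an outer paddle downward, and Claim~\ref{paddle turn} shows a paddle can convert from one form to the other in at most $2w(S)+1$ moves. A five-state argument with two paddles ($60$ cops) then shows the robber's row can be kept inside the union of the two domains forever despite the reformation lag, and the temporary extra $60$ cops are used as two additional paddles moving in the opposite direction so that the initial ``catch-up'' with the robber's unknown row terminates in finite time. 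Only then does Claim~\ref{domain trap} (your Lemma~\ref{trap3} step) apply whenever the robber enters $S'$. Without the paddle/domain construction, the direction-reversal trick, and the state machine, the tracking invariant your argument relies on is not established, so the proof as proposed does not go through.
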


\proof Without loss of generality we may assume that $S$ is a vertical stream, and that the edges of its lines are all directed upwards.
Let $\ell_1$ and $\ell_2$ be the boundary lines of $S$, and let $\ell'_1$ and $\ell'_2$ be the lines outside $S$ adjacent to $\ell_1$ and $\ell_2$, respectively.
We use two formations of cops, which we call paddles: an \emph{inner paddle} (respectively, \emph{outer paddle}) is a formation of 32 cops evenly spaced at distance $m$ along each of $\ell_1$ and $\ell_2$ (respectively, $\ell'_1$ and $\ell'_2$), where each horizontal line has either two or zero cops in each paddle.
By our assumption that $n\gg w$, every paddle has a row containing two cops such that $w$ rows below it contain no cops of the paddle. Let us consider the union of this row together with $12m$ rows above it that contain $26$ cops of our paddle. The union of these rows is the \emph{domain of the paddle}.

\begin{claim} \label{domain trap} If the robber enters the intersection of the domain of a paddle with $S'$ along a horizontal edge, then he will be caught or confined to a stream.
\end{claim}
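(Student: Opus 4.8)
The plan is to reduce the claim to Lemma~\ref{trap3} by exhibiting, at the instant the robber crosses into $S'$, a conflux $\K$ containing him whose three guard posts are reachable in time by three cops of the paddle. With $S$ vertical and all its columns directed upward, let $T$ be the maximal horizontal stream through the robber's row at the moment of entry and set $\K = S \cap T$. Since every column of $S$ meets $T$, the conflux $\K$ spans the full width of $S$, so its left and right boundary lines are exactly $\ell_1$ and $\ell_2$; because the robber lies in $S'$, his column is at distance at least $m-1$ from each of $\ell_1, \ell_2$, which yields the horizontal slack $d_2 \ge m-1$ and $d_2' \ge m-1$ in the notation of Lemma~\ref{trap3}. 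The hypothesis that he enters \emph{along a horizontal edge} is what fixes this row (his move does not change it) and simultaneously guarantees that he is now at least $m-1$ columns inside $S$; note also that $S$ is maximal (it has maximum width), so $\K$ is a maximal conflux and its terminal guard post is defined.

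Next I would locate the corners of $\K$ and assign cops to the guard posts. With columns directed up and $T$ directed, say, rightward, the top-left corner is the main corner whose vertical edge leaves $\K$, the bottom-right corner is the main corner whose horizontal edge leaves $\K$, and the top-right corner is the terminal corner; the first two sit on the boundary columns $\ell_1, \ell_2$. I would use the cops of the (aligned) inner and outer paddles, which occupy the four lines $\ell_1, \ell_2, \ell_1', \ell_2'$ on the relevant rows: an inner cop on the appropriate boundary column, moving upward, serves as $C_V$ at the vertical guard post above the top-left corner; the outer cop just outside $S$ serves as $C_H$ at the horizontal guard post beside the bottom-right corner; and a further inner cop, routed up its column and then horizontally, serves as $C_T$ at the terminal corner or terminal guard post.

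The heart of the argument, and the step I expect to be the main obstacle, is verifying the three move-count inequalities $m_V \le d_1 + d_2' + 1$, $m_H \le d_2 + d_1' + 1$ and $m_T \le d_1 + d_2$. Here I would exploit the geometry of the domain: its anchor row carries two paddle cops with $w$ empty rows beneath it, while the $12m$ rows above carry $26$ cops spaced at distance $m$ along each boundary line. Hence, for whichever row the robber enters on, there are paddle cops both not far above and not far below him on $\ell_1, \ell_2$ (and correspondingly on $\ell_1', \ell_2'$), at vertical distance $O(m)$; combined with the horizontal slack $d_2, d_2' \ge m-1$ already secured by $S'$, each required move count becomes achievable. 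The delicate point is that the boundary columns are \emph{directed}: a cop can travel along a boundary line only upward, so reaching a guard post lying below a given cop must be done by using a lower cop or by first stepping horizontally. The $w$ clear rows beneath the anchor, together with the $12m$-row window, are exactly what guarantee a correctly placed cop in every case, and I would organize the verification as a short case analysis on the robber's vertical position within $\K$ and on which boundary column each guard post falls.

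With the hypotheses of Lemma~\ref{trap3} thus met, committing $C_V$, $C_H$ and $C_T$ ensures that the robber is caught or confined to $S_1 = S$ or $S_2 = T$; in either outcome he is confined to a stream, which is precisely the conclusion of the claim.
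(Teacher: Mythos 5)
Your overall shape --- identify the maximal conflux $\K = S \cap T$ through the robber at the moment of entry and route three cops to its guard posts so that Lemma~\ref{trap3} applies --- matches half of the paper's argument, but there is a genuine gap in the setup. You write that you will ``use the cops of the (aligned) inner and outer paddles, which occupy the four lines $\ell_1, \ell_2, \ell'_1, \ell'_2$,'' and accordingly you draw $C_V$ and $C_T$ from the inner lines and $C_H$ from the outer lines. But Claim~\ref{domain trap} is a statement about a \emph{single} paddle, and a single paddle places cops on only two of those four lines: $\ell_1,\ell_2$ if it is inner, or $\ell'_1,\ell'_2$ if it is outer. In the five-state machine the two paddles are in general not aligned and not in complementary inner/outer configurations (in State 3 both are inner and occupy different rows; in States 2 and 5 one is mid-reformation), so you cannot borrow cops from ``the other'' paddle to populate the missing lines. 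The paper instead splits into cases: if the paddle is inner, the two cops on $\ell_1,\ell_2$ at vertical distance at most $m-1$ already satisfy Lemma~\ref{stream trap} (because entering $S'$ horizontally forces horizontal distance at least $m-1$ to each boundary line), and no conflux argument is needed at all; if the paddle is outer, all three of $C_V$, $C_H$, $C_T$ are taken from $\ell'_1,\ell'_2$ and routed to the guard posts, with $C_V$ paying one extra horizontal move to cross from $\ell'_2$ onto $\ell_2$. Your assignment, which needs an inner cop for $C_V$ and $C_T$ and an outer cop for $C_H$ simultaneously, cannot be executed by the cops actually present.

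The second problem is that the step you yourself flag as ``the main obstacle'' --- verifying $m_V \leq d_1 + d'_2 + 1$, $m_H \leq d_2 + d'_1 + 1$ and $m_T \leq d_1 + d_2$ --- is exactly where the content of the claim lies, and you leave it as an unexecuted case analysis. The paper's verification is not generic: it uses the precise entry distances ($m-1$ from the boundary line the robber crossed, $w(S)-m$ from the other, with $w(S) - m \ge 2(m-1) - 1$ by the choice of $m = \lfloor w(S)/3\rfloor + 1$), the spacing $m$ of the cops along each line, the orientation of $\ell'_1,\ell'_2$ (opposite to $S$, since $S$ is maximal), and a small adjustment when the natural candidates for $C_T$ and $C_H$ coincide. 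It also disposes of the degenerate case $m=1$ separately, which your argument does not mention. Without carrying out this accounting for the cops that one outer paddle actually provides, and without the separate inner-paddle branch via Lemma~\ref{stream trap}, the proposal does not yet establish the claim.
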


To prove the claim, we first observe that there is a pair of cops at the same row or below $p(R)$ at a vertical distance of at most $m-1$. If the paddle is an inner paddle, Lemma~\ref{stream trap} implies that the robber will be caught or confined to $S$.
Hence we may assume that the paddle is an outer paddle.
If $m = 1$, then $w(S)\leq 2$ and $S'=S$. Then the robber must have been at the same vertex as a cop on the previous move, which is a contradiction;
hence, $m \geq 2$.
Let $\K$ be the maximal conflux containing the robber; then $\K$ has height at most $w(S)$.
Suppose without loss of generality that the horizontal edges in $\K$ are directed from $\ell_2$ to $\ell_1$.
Then the robber is at distance $m - 1$ from $\ell_2$ and at distance $w(S) - m$ from $\ell_1$.
There is a cop $C_T$ on $\ell'_1$ above the top row of $\K$, at a distance of at most $m$;
this cop can reach the terminal guard post of $\K$ in at most $m-1$ moves.
Further, there is a pair of cops above or level with $p(R)$ at a vertical distance at most $m - 1$.
Let $C_V$ be the cop in this pair which is on $\ell'_2$, and let $C$ be the cop in this pair which is on $\ell'_1$.
We let $C_H = C$ if $C_T \neq C$; otherwise, we let $C_H$ be the closest cop above $C$ on $\ell'_1$.
Let $d_1$ and $d'_1$ be the distances from $p(R)$ to the top and bottom rows of $\K$, respectively.
Then $C_V$ can reach the vertical guard post of $\K$ in at most $(m-1) + 1 + d_1 + 1 = (m - 1) + d_1 + 2$ moves, while $C_H$ can reach the horizontal guard post of $\K$ in at most $(2m - 1) + d'_1 \leq (w(S) - m) + d'_1 + 2$ moves, where the inequality follows from the definition of $m$.
Now $C_V$, $C_H$ and $C_T$ each make their first moves along their respective paths, and the claim follows by Lemma~\ref{trap3}.
Note that $C_T$, $C_V$ and $C_H$ are cops in the paddle that might be above the domain of the paddle. This is the reason that we use 30 instead of 26 cops in the paddles.

\begin{claim} \label{paddle turn} The cops forming an inner paddle can reform to form an outer paddle with the same domain in at most $2w(S) + 1$ moves, and vice versa.
\end{claim}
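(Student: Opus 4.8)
The plan is to move each cop of the inner paddle horizontally by exactly one column — the cops on $\ell_1$ to $\ell'_1$ and those on $\ell_2$ to $\ell'_2$ — in such a way that every cop finishes in the row where it started. Since the domain of a paddle is determined solely by which rows carry its cops, keeping each cop in its own row automatically yields an outer paddle with the same domain. First I would fix orientations as in the proof of Lemma~\ref{block brook}: take $S$ vertical with all its lines directed upwards, and note that, $S$ having maximum width, it is maximal, so the neighbouring outside lines $\ell'_1$ and $\ell'_2$ are directed downwards. Reading off the types $\tau$ of the boundary vertices, the horizontal edge joining $\ell_1$ to $\ell'_1$ in a given row leaves $S$ precisely when that row is directed leftward, while the edge joining $\ell_2$ to $\ell'_2$ leaves $S$ precisely when that row is directed rightward. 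Hence in every row exactly one of the two boundary cops can step out of $S$ in a single move.

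I would then treat the two kinds of rows separately. A cop whose row lets it cross out simply makes that one crossing move and afterwards stays put. A cop whose row does not permit a direct crossing must detour. Take, for instance, a cop on $\ell_1$ in a rightward row. The cop travels upwards along $\ell_1$ (legal, since $\ell_1$ is directed upwards) until it first meets a leftward row; this row borders the maximal rightward horizontal stream containing its starting row from above, and is therefore reached in at most $w(S)$ steps, since every horizontal stream has width at most $w=w(S)$. The cop then crosses to $\ell'_1$ in one move, and finally travels downwards along $\ell'_1$ (legal, since $\ell'_1$ is directed downwards) back to its original row, again in at most $w(S)$ steps. The whole route has length at most $2w(S)+1$ and ends on $\ell'_1$ in the starting row. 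The symmetric detour for a cop on $\ell_2$ in a leftward row (up along $\ell_2$, across to $\ell'_2$, down along $\ell'_2$), and the detours for the reverse reformation — where the cops start on the downward lines $\ell'_1,\ell'_2$ and instead go \emph{downwards} to a crossable row and then \emph{upwards} along $\ell_1,\ell_2$ — are obtained by reflecting directions, and each is again bounded by $2w(S)+1$.

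Finally, since all cops move simultaneously and every individual route has length at most $2w(S)+1$, the reformation is finished within $2w(S)+1$ rounds, a cop with a shorter route simply waiting once it has arrived. Every cop ends in the row it began in, so the resulting formation is an outer (respectively inner) paddle with exactly the same domain, which proves the claim.

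The step I expect to demand the most care is checking that each detour is realizable as a genuine \emph{directed} walk of the claimed length. One must use that a boundary line and its outside neighbour are oppositely oriented, so that the cop can leave along one line and return along the other; and that a crossable row is always within $w(S)$ steps, which is exactly where the maximality of $S$ enters — it forces every horizontal stream to have width at most $w(S)$ and forces the line bordering a maximal monochromatic block to have the opposite orientation. Everything else is routine bookkeeping with the vertex types.
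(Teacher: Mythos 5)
Your proposal is correct and follows essentially the same route as the paper's proof: travel along the (upward) boundary line for at most $w(S)$ steps to reach a row whose horizontal edges point out of $S$, cross, and return along the oppositely oriented outside line, giving a directed walk of length at most $2w(S)+1$ that preserves each cop's row and hence the domain. Your version merely spells out the details (the opposite orientation of $\ell'_1,\ell'_2$ forced by maximality, and the width bound on horizontal streams) that the paper leaves implicit.
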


To prove the claim, we observe that for any vertex of $\ell_1$ there is a directed path of length at most $2w(S) + 1$ to the horizontally adjacent vertex of $\ell'_1$: move up $d \leq w(S)$ times until there is an edge to $\ell'_1$, move to $\ell'_1$, and then move down $d$ times.
Similarly there is a directed path of length at most $2w(S) + 1$ from any vertex of $\ell'_1$, $\ell_2$ or $\ell'_2$ to the horizontally adjacent vertex of $\ell_1$, $\ell'_2$ or $\ell_2$ respectively;
the claim now follows immediately.

When we will use Claim \ref{paddle turn} and have a paddle change from inner to outer, or vice versa, we will term this as \emph{reforming of the paddle}. Note that the domain of the paddle does not change during the reforming process.
Now suppose we have formed the cops into two paddles, each consisting of $30$ cops.
Observe that the domains $P_1$ and $P_2$ of these paddles each contain $12m + 1 \geq 4w(S) + 2$ rows.
Since having a larger domain only helps us, we may assume that $P_1$ and $P_2$ contain $4w(S) + 2$ rows each.

We first show that once we have set up the appropriate circumstances, we can make sure that the robber remains within either $P_1$ or $P_2$ indefinitely.
To achieve this we define five states, and show that regardless of the robber's move we can either stay in the same state or go to the next state (where State 5 is followed by State 1).
We say that a domain is \emph{moving up} (respectively \emph{moving down}) if the corresponding paddle is an inner (respectively outer) paddle.
We say that a domain has \emph{$t$ steps to start} moving up or down if it is in the process of reforming and will complete this process in $t$ moves, and that it is \emph{active} otherwise.
We say that we \emph{switch the domain} when we reform the corresponding paddle to move in the opposite direction.
Note that the States 1-5 assume that our domain is moving up. The classification of states will be equally true if we reverse the vertical directions or relabel the paddles, so that we may do that at any time.

\medskip
\noindent \textbf{State 1:} $P_1$ is moving up, $P_2$ is moving down, $P_1$ and $P_2$ occupy the same rows and the robber is on one of these rows.

In this state the robber can only force us out of State 1 by leaving the occupied rows.
If he does so and moves above the top row of $P_1$,
then we move $P_1$ up, switch $P_2$ and enter State 2. If he leaves the rows of $P_1$ and $P_2$ below, then we move $P_2$ down, switch $P_1$ and enter State 2 (with the role of $P_1$ and $P_2$ exchanged).
Otherwise we remain in State 1.

\noindent \textbf{State 2:} $P_1$ is moving up, $P_2$ is reforming and has $t$ steps to start moving up, $P_1$ is $d_1$ rows above $P_2$ and the robber is $d_2$ rows below the top row of $P_1$, where $d_1 + d_2 + t \leq 2w(S)+1$.

When we come from State 1 to State 2 we have $d_1=1$, $d_2=0$ and $t\leq 2w(S)$. Later in this state if the robber moves above the top row of $P_1$ then we move $P_1$ up; then $d_1$ increases by $1$ and $d_2 = 0$.
Otherwise we keep $P_1$ stationary; then $d_2$ increases by at most $1$ and $d_1$ remains the same.
Hence $d_1 + d_2$ increases by at most $1$; since $t$ decreases by $1$, the inequality still holds and we stay in State 2 until $t = 0$, when we enter State 3.

\noindent \textbf{State 3:} $P_1$ and $P_2$ are both moving up, $P_1$ is $d_1 \geq 1$ rows above $P_2$ and the robber is $d_2$ rows below the top row of $P_1$, where $d_1 + d_2 \leq 2w(S) + 1$.

In this state if the robber moves above the top row of $P_1$ then we move both $P_1$ and $P_2$ up; then $d_1$ remains the same and $d_2 = 0$.
Otherwise we move only $P_2$ up; then $d_1$ decreases by $1$ while $d_2$ increases by at most $1$.
So the inequality still holds and we stay in State 3 unless $d_1 = 0$, when we enter State 4.

\noindent \textbf{State 4:} $P_1$ and $P_2$ are both moving up and occupy the same rows, and the robber is $d \leq 2w(S) + 1$ rows below the top row of $P_1$.

If $d = 2w(S) + 1$ and the robber moves down then we switch $P_2$ and enter State 5.
Otherwise we move $P_1$ and $P_2$ only if the robber goes above their top row, and remain in State 4.

\noindent \textbf{State 5:} $P_1$ is moving up, $P_2$ is reforming and has $t$ steps to start moving down, $P_1$ and $P_2$ occupy the same rows and the robber is $d$ rows below the top row of $P_1$, where $d + t \leq 4w(S) + 2$ and $d - t \geq 0$.

In this case we keep both $P_1$ and $P_2$ stationary and stay in State 5 until $t = 0$, when we enter State 1.

\medskip
We next show that we can reach one of these states.
We form our cops into four paddles of $30$ cops each, with domains $P_1$, $P_2$, $P'_1$ and $P'_2$.
Initially all of these domains occupy the same rows. Then we begin with $P_1$ and $P_2$ moving up and $P'_1$ and $P'_2$ moving down.
At some step, the robber will occupy either the top row of $P_1$ and $P_2$ or the bottom row of $P'_1$ and $P'_2$.
Without loss of generality he occupies the top row of $P_1$ and $P_2$.
At this point we enter State 3, and release the cops from $P'_1$ and $P'_2$.

Now if at any point the robber is outside $S'$, Claim~\ref{domain trap} implies that he cannot re-enter $S'$ without being caught or confined to a stream.
To force the robber to leave $S'$, we choose an arbitrary row  and place two cops at either end of the intersection of this row with $S$.
One cop on each end remains stationary, while the remaining two cops move in the direction of $S$.
If the robber does not leave $S'$, then after some finite time he will be on the same row as one of the pairs of cops, at which point he is confined to a stream.
\endproof

In the sequel we will use Lemma \ref{block brook} on streams that may not be maximum width. The only feature where we needed to use that $S$ has large width was that any crossing stream in the domain of the paddle is not wider than $w(S)$. In order to have this property, some confluxes of the considered stream will be guarded (by involving 3 cops for each such conflux by using Lemma \ref{trap3}).

\begin{thm} \label{thm:any directed cycles} 
If $G$ is any straight-ahead orientation of a toroidal grid, then $c(G) \leq 319$.
\end{thm}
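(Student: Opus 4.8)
The plan is to reduce, via the covering Lemma~\ref{lem:cover} and the surface discussion of Section~1.2, to the case $G=C_n\,\square\,C_n$ with a straight-ahead orientation, where we may assume $n\gg w$ with $w$ the maximum width of a stream. Throughout, one cop is devoted to chasing the robber so that the hypothesis ``the robber is forced to move'' of Lemmas~\ref{trap1}--\ref{trap3} and~\ref{block brook} is met. The strategy, foreshadowed just before Lemma~\ref{block brook}, is to use blocked streams as walls: cutting the torus along two vertical and two horizontal streams confines the robber to a bounded rectangular region, after which the remaining cops finish him off.

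Concretely, I would first apply Lemma~\ref{block brook} to a vertical stream $S_1$ of maximum width. Committing $60$ cops (and temporarily borrowing $60$ more to reach one of the five states and to sweep a row of $S_1$), the robber is forced into $S_1'$ and hence is caught or confined to a stream; in particular he can never cross $S_1$, and the borrowed cops are then released. Repeating with a second vertical stream $S_2$ confines the robber to the band of columns strictly between $S_1$ and $S_2$, and two further applications, now to horizontal streams $S_3$ and $S_4$ chosen inside that band, confine him to the bounded rectangle $R$ cut out by $S_1,S_2,S_3,S_4$. Each of the four walls permanently commits $60$ cops, so $240$ cops are tied up once $R$ has been formed.

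The streams $S_2,S_3,S_4$ need not have maximum width, so to legitimately run a paddle on them I would use the device noted in the closing remark of the section: any conflux in the paddle's domain whose crossing stream is wider than the blocked stream is guarded by three cops through Lemma~\ref{trap3}. Since a paddle's domain spans only $O(w)$ lines, only a bounded number of such wide confluxes can occur, so only a bounded number of three-cop guards is needed. Counting the peak usage during the construction of the fourth wall---the three already-committed walls ($3\cdot 60$), the $120$ cops used to build the fourth wall (of which $60$ will be committed and $60$ are temporary), the bounded number of three-cop guards still in play, and the single chasing cop---gives the claimed bound of at most $319$. Once $R$ has been formed the temporary cops are freed, and since $R$ is a bounded region with no wrap-around the robber is caught within budget by sweeping $R$ with these freed cops, using Lemma~\ref{trap2} on the confluxes of $R$.

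The step I expect to be the main obstacle is the accounting that pins the total at $319$: one must check that the temporary cops of each phase are genuinely available for the next, bound precisely how many wide confluxes inside a paddle's domain require guarding, and verify that every wall is completed before the robber can slip across the stream it blocks. A related difficulty is that after the first cuts the robber no longer roams the full torus but a directed sub-cylinder, so one must confirm that Lemmas~\ref{block brook} and~\ref{trap3} apply verbatim there and that ``maximum width'' is interpreted relative to the confined region when choosing $S_3$ and $S_4$.
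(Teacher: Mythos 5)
Your overall architecture (reduce to $C_n\,\square\,C_n$, use Lemma~\ref{block brook} to erect stream ``walls'', guard wide crossing confluxes with Lemma~\ref{trap3}) matches the paper's, but your endgame has a genuine gap. After four walls you propose to finish by ``sweeping'' the rectangle $R$ they cut out, using Lemma~\ref{trap2}. But $R$ is only bounded in the sense of having no wrap-around; its size is not bounded by any constant (it can contain almost all of the torus, hence arbitrarily many lines and confluxes), and no constant-cop sweeping procedure for a directed region is available in the paper or supplied by you. The paper avoids this entirely: it never sweeps. It keeps iterating the wall construction, always blocking the widest stream intersecting the robber's current territory minus the guarded streams, releasing the wall (and its conflux guards) that no longer borders the component containing the robber, and thereby strictly shrinking the territory until it is empty. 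At most five walls and six guarded confluxes are ever in play simultaneously, which is exactly where $5\cdot 60 + 6\cdot 3 + 1 = 319$ comes from; your structure of four committed walls plus a sweep does not produce this count, and your accounting leaves the number of three-cop guards as an unquantified ``bounded number''.

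A second, related flaw is your justification for why only boundedly many wide crossing confluxes need guarding: you argue that a paddle's domain spans only $O(w)$ lines. But the domain moves up and down through the five states, so over the course of the game the paddle can encounter arbitrarily many crossing streams, including wide ones. The correct reason, which the paper relies on, is the selection order: each newly blocked stream is chosen to be the widest stream meeting $T\cap H$, so the only crossing streams wider than it are among the constantly many already-guarded ones, and only their intersections (at most six confluxes) need the three-cop treatment. You partially gesture at this by saying ``maximum width'' should be interpreted relative to the confined region, but you do not make it the load-bearing argument, and without it the claim that Lemma~\ref{block brook} applies to $S_2, S_3, S_4$ is unsupported. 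To repair the proof you would need to replace the sweep by the paper's iterative shrinking loop and redo the cop count accordingly.
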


\proof 
For each stream $S$ mentioned below, $S'$ is formed by the lines of $S$ at distance at least $m-1$ from the boundary lines of $S$
(or all the lines of $S$, if $m \leq 1$), where $m = \lfloor w(S)/3 \rfloor+1$.
For any subgraph $H$ of $G$, we define $S(H)$ to be a stream of maximum width intersecting $H$.

Throughout the proof, let $T$ be the \emph{territory of the robber}, i.e., vertices of the graph that the robber can reach without getting caught. Also let $H$ be $G-U$ where $U$ is the union of guarded streams.

Let $S_1, S_2$ and $S_3$ be three widest streams in $G$. We will have two cases.

\textbf{Case 1.} $S_1, S_2$ and $S_3$ are (without loss of generality) vertical streams. Then Lemma \ref{block brook} shows that we can commit $3\times 60$ cops (and temporarily using another 60 cops) to guard $S_1, S_2$ and $S_3$. The robber is confined between two of these streams, say $S_1$ and $S_2$.
Now we can release the cops used to guard $S_3$.  Now, redefine $S_3$ to be equal to $S(T\cap H)$ (i.e., the widest stream intersecting the territory of the robber).
We can continue this approach to shrink the territory of the robber until $S_3$ is a horizontal stream, which brings us to Case 2.

\textbf{Case 2.} $S_1$ and $S_2$ are vertical streams but $S_3$ is a horizontal one. Then we can commit $2\times 60$ cops to guard $S_1$ and $S_2$. Let $\K_1$ and $\K_2$ be the intersection of $S_1$ and $S_2$ with $S_3$, respectively. Using Lemma \ref{trap3}, we can commit $2\times 3$ cops to guard $\K_1$ and $\K_2$ and then using 60 cops we can guard $S_3$. Note that there is no stream wider than $S_3$ that intersects $S_3$ in $T\cap H$. Also note that since the horizontal movement of the robber is bounded between $S_1$ and $S_2$, we do not need (even temporarily) an extra 60 cops to guard $S_3$.
Now, let $S_4$ be the next widest stream intersecting $T\cap H$. 

\textbf{Subcase 2a.} If $S_4$ is a vertical stream, then use another 3 cops to guard $\K_3:=S_3\cap S_4$ and use 60 cops to guard $S_4$. Now, based on the position of the robber, redefine $S_1$ and $S_2$, release the third set of 60 cops, shrink the territory of the robber and repeat Subcase 2a.

\textbf{Subcase 2b.} If $S_4$ is a horizontal stream, then commit $2\times 3$ cops to protect $\K_3:=S_1\cap S_4$ and $\K_4:=S_2\cap S_4$ and use 60 cops to guard $S_4$. Now the robber is confined between $S_1, S_2, S_3$ and $S_4$.

Let $S_5$ be the next widest stream intersecting $T\cap H$. Without loss of generality we can assume that it is a vertical one. Commit $2\times 3$ cops to protect $\K_5:=S_5\cap S_3$ and $\K_6:=S_5\cap S_4$ and use 60 cops to guard $S_5$. Now, based on the position of the robber, redefine $S_1, S_2, S_3$ and $S_4$, release the fifth set of 60 cops, shrink the territory of the robber and repeat Subcase 2b until we catch the robber.

Note that we have used at most 5 sets of 60 cops to guard $S_i$ ($i=1,\ldots, 5$) and at most $6\times 3$ cops to guard the intersections of these streams. Also, to avoid complication, we use one cop to force the robber to move. 
Therefore by using at most 319 cops we can capture the robber.
\endproof

%\section*{References}

\bibliographystyle{plain}
\bibliography{reference}

\end{document}